\documentclass{amsart}

\usepackage{amsmath}
\usepackage{amsthm}
\theoremstyle{plain}
\newtheorem{theorem}{Theorem}[section]
\newtheorem{lemma}[theorem]{Lemma}
\newtheorem{prop}[theorem]{Proposition}

\newtheorem{cor}[theorem]{Corollary}
\newtheorem{conj}[theorem]{Conjecture}

\theoremstyle{remark}
\newtheorem{rem}[theorem]{Remark}
\newtheorem{remark}[theorem]{Remark}
\newtheorem{example}[theorem]{Example}

\usepackage{mathtools}
\usepackage{tikz-cd}
\usepackage{xurl}
\newcommand{\twomat}[4]{\begin{pmatrix} #1 & #2 \\ #3 & #4 \end{pmatrix}}

\usepackage[T1]{fontenc}
\usepackage{tabularx, colortbl}

\usepackage{
	enumitem,
	amssymb,
	stmaryrd,
	amsmath,
	xcolor,
	verbatim,
	float
}
\usepackage[expansion=false]{microtype}
\usepackage[backend=bibtex,doi=false,isbn=false,url=false,style=alphabetic]{biblatex}
\AtEveryBibitem{%
  \clearfield{note}%
}

\usepackage{graphicx}
\definecolor{ochre}{RGB}{204, 119, 34}
\usepackage[colorlinks=true,linkcolor=blue,citecolor=blue,filecolor=magenta,urlcolor=ochre]{hyperref}

\newcommand{\F}{\mathbb{F}}
\newcommand{\N}{\mathbb{N}}
\newcommand{\Q}{\mathbb{Q}}
\newcommand{\Z}{\mathbb{Z}}

\newcommand{\Zhat}{\hat{\mathbb{Z}}}
\newcommand{\E}{\mathcal{E}}
\newcommand{\Cc}{\mathcal{C}}
\newcommand{\Fc}{\mathcal{F}}
\newcommand{\Pc}{\mathcal{P}}

\newcommand{\GalQ}{\mathrm{Gal}(\bar{\Q}/\Q)}
\newcommand{\Gal}{\mathrm{Gal}}
\newcommand{\Frob}{\mathrm{Frob}}
\newcommand{\im}{\mathrm{im}}
\newcommand{\lcm}{\mathrm{lcm}}
\newcommand{\pres}[1]{\left\langle#1\right\rangle}
\newcommand{\Dc}{\mathcal{D}}
\newcommand{\onto}{\twoheadrightarrow}
\DeclareMathOperator{\rad}{\mathrm{rad}}
\newcommand{\isomto}{\overset{\sim}{\rightarrow}}

\title{The smallest invariant factor of elliptic curves, and coincidences}
\author{Alexander Milner}
\address{School of Mathematics, University of Edinburgh}
\email{a.j.c.milner@sms.ed.ac.uk}
\author{Jack Shotton}
\address{Department of Mathematical Sciences, Durham University}
\email{jack.g.shotton@durham.ac.uk}

\begin{document}
\begin{abstract}
  For an elliptic curve $\E$ over $\Q$ and a natural number $j$, Cojocaru has shown that there is an explicit constant
  $\Cc_{\E,j}$ giving (under GRH) the density of primes $p$ of good reduction such that the smallest invariant factor of
  $\E(\F_p)$ is $j$. For $\E$ without complex multiplication, we study the question of when $\Cc_{\E,j}$ is positive (a
  necessary and, on GRH, sufficient condition for there to be infinitely many such $p$), strengthening the main result of
  ~\cite{Kim}. Our arguments are group-theoretic using the image of the adelic Galois representation of
  $\E$. Experimentally, $\Cc_{\E,j}$ appears to vanish only when there is a coincidence of division fields; we document
  a number of families of such coincidences arising from abelian division fields.
\end{abstract}
\maketitle
\section{Introduction}
\label{sec:introim}
Let $\E$ be an elliptic curve over $\Q$ without complex multiplication, and let $p$ be a prime of good reduction for
$\E$. Then $\E(\F_p)$ is an abelian group that is a product of at most two cyclic groups, and so we may write
\[\E(\F_p) \cong \Z/d_{\E,p}\Z \times \Z/e_{\E,p}\Z\]
where $d_{\E,p}, e_{\E,p} \in \N$ with $d_{\E,p} \mid e_{\E,p}$. Thus $d_{\E,p} = 1$ if and only if $\E(\F_p)$ is
cyclic. We can ask how often, as $p$ varies, $d_{\E,p}$ takes a given value, and a conditional answer is given by
Cojocaru.

\begin{theorem}[\cite{Co2}, Theorem 2] \label{C2}
    Let $j$ be a positive integer. Under the generalised Riemann hypothesis (GRH) for the Dedekind zeta functions of the division fields $\mathbb{Q}(\E[n])$ of $\E$, 
    \[\#\{p \le x : d_{\E,p}=j\} \sim \mathcal{C}_{\E,j} \, Li(x)\]
    where
    \[\Cc_{\E, j} = \sum_{k=1}^\infty \frac{\mu(k)}{[\Q(\E[jk]):\Q]}.\]
    (Here $\mu$ is the M\"{o}bius function).
\end{theorem}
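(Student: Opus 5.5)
The plan is to follow the standard Hooley-style argument for Artin's conjecture, adapted to division fields, as in Cojocaru's work on cyclicity.

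\textbf{Step 1: a Galois criterion.} For a prime $p \nmid nN_\E$, we have $\E[n] \subseteq \E(\F_p)$ if and only if $p$ splits completely in $\Q(\E[n])$. The forward direction holds because Frobenius acting trivially on $\E[n]$ means the reduction $\E[n] \to \E(\F_p)[n]$, which is injective for $p \nmid n$, lands in rational points. The converse is the same argument read backwards. Hence, for such $p$, $n \mid d_{\E,p}$ exactly when $p$ splits completely in $\Q(\E[n])$. Then $d_{\E,p}=j$ if and only if $j \mid d_{\E,p}$ and no prime $\ell$ has $j\ell \mid d_{\E,p}$. By M\"obius inversion,
\[\#\{p\le x: d_{\E,p}=j\} = \sum_{k\ge 1}\mu(k)\,\pi_{jk}(x),\]
where $\pi_n(x)$ counts $p \le x$ of good reduction splitting completely in $\Q(\E[n])$, up to $O(\log N_\E)$ bad primes. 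Only $k$ with $jk \le \sqrt{x}+1$ contribute, since $n^2 \mid \#\E(\F_p) \le p+1+2\sqrt p$. Moreover, the Weil pairing forces $\zeta_n \in \F_p$, so $n \mid p-1$.

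\textbf{Step 2: split the $k$-sum.} Split it at parameters $y < z$:
\begin{itemize}
\item small $k \le y$;
\item medium $y < k \le z$;
\item large $k > z$, meaning $jk$ up to about $\sqrt{x}$.
\end{itemize}

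\emph{Small $k$.} Apply the effective Chebotarev theorem under GRH (Lagarias--Odlyzko, with Serre's refinement):
\[\pi_n(x) = \frac{\mathrm{Li}(x)}{[\Q(\E[n]):\Q]} + O\!\left(n^4 x^{1/2}\log(nN_\E x)\right),\]
using that the degree is at most $n^4$ and that only primes dividing $nN_\E$ ramify. Summing over $k \le y$ gives the main term $\Cc_{\E,j}\mathrm{Li}(x)$. Two inputs are needed here:
\begin{itemize}
\item Serre's open image theorem, giving $[\Q(\E[n]):\Q] \gg_\E n^4/\prod_{\ell\mid n}\ell$, so that the tail $\sum_{k>y} 1/[\Q(\E[jk]):\Q]$ is $O(y^{-2})$ and the series converges absolutely;
\item the error bound $O(y^5 x^{1/2}\log x)$.
\end{itemize}
Choosing $y = x^{1/10}/\log^2 x$ makes this error $o(x/\log x)$.

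\emph{Medium and large $k$.} Bound $|\mu(k)\pi_{jk}(x)|$ crudely. It suffices to bound, over primes $\ell$ in the range, the number of $p \le x$ with $\ell \mid d_{\E,p}$. For medium $\ell$, use the Brun--Titchmarsh inequality on the condition $\ell \mid p-1$, combined with the Chebotarev bound applied to $\Q(\E[\ell])$, or simply $\pi(x;\ell,1)$. This bound is too weak on its own, so I would use Cojocaru's refinement: the condition $\ell^2 \mid \#\E(\F_p)$ together with $\ell \mid p-1$ forces $a_p \equiv 2 \pmod \ell$. One can then count via a Chebotarev estimate in $\Q(\E[\ell])$ with density $\asymp \ell^{-3}$, or use the unconditional argument that $\ell^2 \mid p+1-a_p$ and $\ell\mid p-1$, counting pairs $(p,a_p)$ with $|a_p|\le 2\sqrt p$. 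The latter gives a bound of $O(x^{3/2}/z\cdot)$-type for large $\ell$. These combine to give $o(\mathrm{Li}(x))$ once $z$ is chosen between $x^{1/10}$ and $\sqrt{x}$, e.g.\ $z=\sqrt{x}/\log^2 x$.

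\textbf{Main obstacle.} The hard part is the range of large primes $\ell$ near $\sqrt{x}$, where Chebotarev under GRH gives nothing. I would handle it with the elementary observation that $\ell \mid d_{\E,p}$ forces $\ell^2 \mid p+1-a_p$ and $\ell \mid p-1$, hence $\ell \mid a_p-2$. Since $a_p \neq 2$ for all but finitely many $p$, as $\E$ is non-CM, this only requires care when $a_p=2$. Counting the $p$ with $p\equiv 1 \pmod \ell$ and $\ell^2 \mid p+1-a_p$ then gives $\ll x/\ell^2 \cdot \sqrt{x}/\ell$ summed, which is $o(x/\log x)$ for $\ell>z$. The delicate bookkeeping is choosing $y,z$ so that all three ranges overlap. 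This is exactly as in Cojocaru's proof of the cyclicity case $j=1$, and the general-$j$ case reduces to it after replacing $n$ by $jk$.
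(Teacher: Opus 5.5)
The paper does not prove this statement; it quotes it from Cojocaru. Your overall structure is the right one: M\"obius inversion over division fields, GRH Chebotarev for small moduli, and the constraint $n\mid p-1$, $n^2\mid p+1-a_p$ for large moduli. As written, however, there is a hole between your small and large ranges.

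The hole comes from the Chebotarev error you quote. For the identity class in $L=\Q(\E[n])$, Lagarias--Odlyzko (with $|C|=1$, plus Hensel's bound on $\log d_L/[L:\Q]$) gives $O(x^{1/2}\log(nN_\E x))$, not $O(n^4x^{1/2}\log(nN_\E x))$. With your weaker bound the small range stops at $y\approx x^{1/10}$, and none of your medium-range tools covers $x^{1/10}<\ell\le x^{1/4}$:
\begin{itemize}
\item Brun--Titchmarsh gives only $O(x/\log x)$ there, not $o(x/\log x)$.
\item The condition $\det=1$, $\mathrm{tr}=2$ in $GL_2(\F_\ell)$ is a class of about $\ell^2$ elements. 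Its density is therefore $\asymp\ell^{-2}$ (not $\ell^{-3}$), and its GRH error is $\asymp\ell^2x^{1/2}\log x$, which is useless once $\ell$ exceeds about $x^{1/6}$. Refining to a mod $\ell^2$ condition only enlarges the class.
\item With your error term, the identity-class bound summed up to $z$ is about $z^5x^{1/2}$.
\item The elementary count $\ll x^{3/2}/\ell^3$ sums to $o(x/\log x)$ only for $\ell$ beyond about $x^{1/4}$.
\end{itemize}

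There are two smaller problems. First, you cannot bound $\sum_{k>y}\mu(k)\pi_{jk}(x)$ by a count over \emph{primes} $\ell>y$ when you split by the size of $k$. Squarefree $k>y$ all of whose prime factors are $\le y$ escape such a count; a prime-by-prime reduction needs Hooley's setup, sieving only by primes up to $\xi\asymp\log x$. Second, the claim that $a_p\neq 2$ for all but finitely many $p$ is unjustified, since Lang--Trotter predicts infinitely many such $p$. It is harmless, though, because $a_p=2$ is just the case $c=0$ below.

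The repair actually simplifies the proof. For any $n\le\sqrt{x}+1$, prime or not, $\E[n]\subseteq\E(\F_p)$ gives $p-1=n^2b+nc$ with $1\le b\ll x/n^2$ and $|c|\ll\sqrt{x}/n$. So at most $O(x^{3/2}/n^3)$ primes $p\le x$ qualify.
\begin{itemize}
\item Apply the correct Chebotarev bound to all squarefree $k\le y$. This gives error $O(yx^{1/2}\log x)$ and a main-term tail of $O(\mathrm{Li}(x)y^{-2})$.
\item Apply the elementary bound to all squarefree $k>y$, for a total of $O(x^{3/2}/y^2)$.
\end{itemize}
Taking $y=(x/\log x)^{1/3}$ yields $\Cc_{\E,j}\mathrm{Li}(x)+O(x^{5/6}(\log x)^{2/3})$, with no medium range at all.
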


In fact, the same statement also holds if $\E$ has complex multiplication, and the GRH is then not required (see~\cite{murty_artins_1983}). We will only consider the case of elliptic curves without complex multiplication.

As stated, this theorem does not address the issue of whether $\mathcal{C}_{\E,j} > 0$ --- that is, whether
(conditionally) there are infinitely many $p$ with the smallest invariant factor of $\E(\F_p)$ equal to $j$.\footnote{If
  $\Cc_{\E,j} = 0$ then it is easy to show $d_{\E, p} \ne j$ for all but finitely many $p$, see Proposition~\ref{prop:Cejzeroimplication}
  below.} Indeed, it is in fact possible that $\mathcal{C}_{\E,j} = 0$, even if $j = 1$: if $\E$ has full rational
$2$-torsion, then $2 \mid d_{\E,p}$ for all $p$. For $j = 1$ this is the only obstruction:
\begin{theorem}[{\cite[Theorem 1.1]{serreResumeDesCours}}] \label{thm:CE1}Suppose that $\Q(\E[2]) \ne \Q$. Then \[\Cc_{\E, 1} > 0.\]
\end{theorem}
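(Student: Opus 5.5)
The plan is to rewrite $\Cc_{\E,1}$ as an Euler-type product over the image of the adelic Galois representation and to show that no factor vanishes. Let $G \subseteq \mathrm{GL}_2(\Zhat)$ be the image of $\rho_\E : \GalQ \to \mathrm{GL}_2(\Zhat)$. By Serre's open image theorem $G$ is open, so there is an integer $M$ (divisible by $2$, by every prime $\ell$ where the mod-$\ell$ image is not $\mathrm{GL}_2(\F_\ell)$, and by enough primes to control the entanglements) with $G = \pi^{-1}(G(M)) \cap \ldots$. More precisely, $G = G_M \times \prod_{\ell \nmid M} \mathrm{GL}_2(\Z_\ell)$, where $G_M$ is the image in $\prod_{\ell \mid M}\mathrm{GL}_2(\Z_\ell)$. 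Since $[\Q(\E[k]):\Q] = \#G(k)$, where $G(k)$ is the image of $G$ in $\mathrm{GL}_2(\Z/k)$, multiplicativity over $k$ coprime to $M$ gives
\[
\Cc_{\E,1} \;=\; \Bigl(\sum_{k \mid \rad M} \frac{\mu(k)}{\#G(k)}\Bigr)\prod_{\ell \nmid M}\Bigl(1 - \frac{1}{\#\mathrm{GL}_2(\F_\ell)}\Bigr).
\]
The infinite product converges to a positive number. So everything reduces to showing that the finite sum is positive.

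To interpret the finite sum, let $m = \rad M$ and consider the finite group $H = G(m) \subseteq \mathrm{GL}_2(\Z/m)$. By inclusion–exclusion,
\[
\sum_{k\mid m}\frac{\mu(k)}{\#G(k)} \;=\; \frac{1}{\#H}\,\#\{h \in H : h \not\equiv 1 \bmod \ell \text{ for every } \ell \mid m\}.
\]
Here we use that $\#\ker(H \to G(k)) = \#H/\#G(k)$ counts the elements of $H$ that are trivial mod $k$. The statement therefore becomes purely group-theoretic: we must exhibit some $h \in H$ whose reduction mod $\ell$ is nontrivial for every prime $\ell \mid m$. Equivalently, $H$ is not the union of the subgroups $K_\ell = \ker(H \to G(\ell))$.

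The hypothesis $\Q(\E[2]) \neq \Q$ says that $K_2 \neq H$. For odd $\ell$, $K_\ell \ne H$ also holds, since $\Q(\E[\ell]) \ne \Q$: it contains $\Q(\zeta_\ell)$ by the Weil pairing, because $\det$ surjects onto $(\Z/\ell)^\times$. So every $K_\ell$ is a proper normal subgroup. However, a group can be a union of proper subgroups, so this alone is not enough. The main obstacle is handling the possible entanglements among the $K_\ell$. I would use the determinant. For odd $\ell$, choose $h$ with $\det h$ a primitive root modulo every odd $\ell \mid m$. This is possible because $\det : H \to (\Z/m)^\times$ is surjective, by the cyclotomic character and $\Q(\zeta_m)$ having full degree. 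Such an $h$ is automatically nontrivial mod every odd $\ell \ne 3$. Since $\det h \ne 1$ mod $\ell$ also for $\ell=3$, the only remaining issue is $\ell = 2$, where $\det$ gives no information.

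So consider the coset $D = \{h \in H : \det h \equiv g \bmod m\}$, where $g \equiv -1$ modulo each odd prime, say; any such $h$ is nontrivial at all odd $\ell$. We need some $h \in D$ that is nontrivial mod $2$. Suppose instead that every $h \in D$ were trivial mod $2$. Then $D \cdot D^{-1} \cdot D$-type products show that $D \subseteq K_2$, and hence $\det^{-1}(\langle g\rangle) \cap H \subseteq K_2\cdot$(kernel of $\det$). Concretely, $\Q(\E[2]) \subseteq \Q(\zeta_m)^{\ker}$: the $2$-division field would be contained in the subfield of $\Q(\zeta_m)$ fixed by $g$. That subfield is an abelian field. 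If this happens for every admissible choice of $g$, meaning every $g$ that is a non-identity class mod each odd $\ell$, then $\Q(\E[2])$ would be fixed by the subgroup generated by all such $g$. For $m$ odd part nontrivial this subgroup is all of $(\Z/m)^\times$ except in degenerate cases, which would force $\Q(\E[2]) = \Q$, a contradiction. The delicate case is when $3 \mid m$, since there the only admissible class mod $3$ is $-1$. I expect that to be where care is required: I would argue that the fixed field of the admissible elements is contained in $\Q(\sqrt{-3})$. Then I would check directly that $\Q(\E[2]) = \Q(\sqrt{-3})$ still leaves room, by choosing $h$ with $\det h \equiv 1 \bmod 3$ but $h \not\equiv 1 \bmod 3$, using that $\ker\det$ mod $3$ is $\mathrm{SL}_2$ and is rarely contained in the mod-$3$ trivial locus jointly with the mod-$2$ constraint.
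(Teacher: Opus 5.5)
Your reduction is the paper's route. Splitting off the Euler factor at primes not dividing $M$ and then applying inclusion--exclusion is Propositions~\ref{prop:cej-factor} and~\ref{prop:Cejiff} with $j=1$. Your coset argument is equivalent to the Goursat argument in the proof of Theorem~\ref{T4b}: if every $h$ with $\det h=g$ is trivial mod $2$, then $\ker(\det|_H)\subseteq K_2$ and $\Q(\E[2])\subseteq\Q(\zeta_m)^{\langle g\rangle}$.

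\textbf{The gap.} The proof stops exactly where the content lies. You need the admissible classes $A=\prod_{\ell\mid m,\ \ell\text{ odd}}\bigl((\Z/\ell\Z)^\times\setminus\{1\}\bigr)$ to generate $(\Z/m\Z)^\times$. You only assert this ``except in degenerate cases'', and you then misdiagnose the case $3\mid m$.

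\textbf{How to close it.} There are no degenerate cases. For $\ell\ge 5$, every element of $(\Z/\ell\Z)^\times$ is a product of two, and also of three, nontrivial elements. So products of two elements of $A$ give every class $\equiv 1\bmod 3$, and products of three give every class $\equiv -1\bmod 3$. Hence $\langle A\rangle=(\Z/m\Z)^\times$; this is precisely the $x=yz$ / $x=wyz$ step in the paper's proof. Once $\ker(\det|_H)\subseteq K_2$, the set of $g$ with $\det^{-1}(g)\subseteq K_2$ is the subgroup $\det(K_2)$. If it contained $A$ it would be everything, so $K_2=H$, i.e.\ $\Q(\E[2])=\Q$, a contradiction. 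No special treatment of the prime $3$ is needed.

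\textbf{Why your fix at $3$ fails.} Suppose $\Q(\E[2])=\Q(\sqrt{-3})$, which happens e.g.\ for $y^2=x^3+x^2+x$. Then any $h$ with $\det h\equiv 1\bmod 3$ fixes $\zeta_3$. It therefore acts trivially on $\Q(\E[2])$ and is the identity mod $2$, which is exactly what you must avoid. Its determinant also no longer certifies that $h\not\equiv 1\bmod 3$. In that case the right choice is any admissible $g$: since $g\equiv -1\bmod 3$, it acts as $-1$ on $\sqrt{-3}$. The field fixed by $A$ is just $\Q$, so $\Q(\sqrt{-3})$ never needs separate checking.
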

\begin{rem}
    The statement also holds in the case that $\E$ has complex multiplication.
\end{rem}

\begin{remark}\label{rem:CE1-proofs} The theorem is due to Serre, but the proof is unpublished.  In~\cite[Theorem~1]{guptaCyclicityGenerationPoints1990} it is shown unconditionally that, if $\Q(\E[2])\ne \Q$, then $\E(\F_p)$ is cyclic for infinitely many $p$. This implies Theorem~\ref{thm:CE1}, by
  Proposition~\ref{prop:Cejzeroimplication} below.

  Another proof of Theorem~\ref{thm:CE1} is given in \cite[Section 6]{CoMu}. This proof contains a small error which we explain and correct in Section~\ref{sec:CoMu}. Our main result also gives an alternative proof of Theorem~\ref{thm:CE1}.
\end{remark}

In~\cite{Kim}, Kim proves\footnote{Kim states that this result is conditional on the GRH, but in fact his result only
  requires Theorem~\ref{thm:CE1} which holds unconditionally if $\Cc_{\E,j}$ is defined as an infinite sum.} that the
conclusion of Theorem~\ref{thm:CE1} holds as long as $j$ is coprime to the \emph{adelic level} $m_\E$ of $\E$, the
minimal integer such that the image of the adelic Galois representation of $\E$ is the full pre-image of a subgroup of
$GL_2(\Z/m_\E\Z)$. If $A(\E)$ is the product of the primes such that the $p$-adic Galois representation associated to
$\E$ is not surjective and $N_\E$ is the conductor of $\E$, then the prime factors of $m_\E$ divide $2N_\E A(\E)$; see
Proposition~\ref{prop:Avsm}.\footnote{This definition of $A(\E)$ has the same prime factors as that in
  ~\cite{danielsSerresConstantElliptic2022} and differs from that of~\cite{cojocaruSurjectivityGaloisRepresentations2005}
  by (perhaps) factors of 30. It is unclear what definition of $A(\E)$ is used in~\cite{Kim}, but what is actually
  proved is the result as stated here in terms of the adelic level.} In Corollary~\ref{cor:j-coprime-6AE}, we prove the
following generalisation of Kim's result.
\begin{theorem} Suppose that $j$ is coprime to $2A(\E)$ and that $\Q(\E[2j]) \ne \Q(\E[j])$. Then \[\Cc_{\E, j} > 0.\]
\end{theorem}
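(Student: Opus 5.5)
We rewrite $\Cc_{\E,j}$ as a Haar-measure density on the adelic image and then show that the relevant set has positive measure. Let $G \subseteq GL_2(\Zhat)$ be the image of the adelic Galois representation, with Haar probability measure. Since $\Q(\E[jk])$ has Galois group $G(jk)$, the image of $G$ in $GL_2(\Z/jk\Z)$, a Chebotarev/inclusion–exclusion argument gives
\[\Cc_{\E,j} = \mu_G\bigl(\{g \in G : g \equiv 1 \bmod j,\ g \not\equiv 1 \bmod j\ell \text{ for every prime } \ell\}\bigr),\]
with the limit justified by the convergence of $\sum_k 1/[\Q(\E[jk]):\Q]$. This is the probability that a random Frobenius acts trivially on $\E[j]$ but on no $\E[j\ell]$. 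So it suffices to show this set has positive measure.

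The plan is to split the primes into a finite set $S$ and the rest. Take $S$ to be the primes dividing $2\,m_\E\, j$, and put $M=\prod_{\ell\in S}\ell^{v_\ell(jm_\E)+1}$. By Proposition~\ref{prop:Avsm} and the definition of the adelic level, $G = G_M \times \prod_{\ell\notin S} GL_2(\Z_\ell)$ up to the usual entanglement, which is concentrated at level $m_\E$. For $\ell \notin S$ the condition reads $g_\ell \not\equiv 1 \bmod \ell$, and its measure is $1-1/\#GL_2(\F_\ell)$. The product $\prod_{\ell}(1-1/\#GL_2(\F_\ell))$ converges to a positive number. We therefore reduce to a finite problem: find $g\in G(M)$ with $g\equiv 1 \bmod j$ and $g\not\equiv 1\bmod j\ell$ for every $\ell\in S$. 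Any open set of such elements then has positive measure, and multiplying by the tail product keeps it positive.

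For the finite problem, primes $\ell\in S$ with $\ell\nmid 2j$ are harmless. Since $j$ is coprime to $A(\E)$, the $\ell$-adic image is full for $\ell\mid j$, odd. We need to handle $\ell=2$ and each $\ell\mid j$ simultaneously inside the kernel $H=\ker(G\to G(j))$, which is the Galois group of $\Q(\E[\infty])/\Q(\E[j])$. For $\ell \mid j$: the surjectivity of $\rho_{\E,\ell^\infty}$ together with the coprimality of $j$ to $2A(\E)$ should give that $H$ surjects onto $1+\ell^{v_\ell(j)}M_2(\Z_\ell)$ modulo $\ell^{v_\ell(j)+1}$. This is a big elementary abelian group, and requiring nontriviality there excludes only a proper subgroup. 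For $\ell=2$: the hypothesis $\Q(\E[2j])\ne\Q(\E[j])$ says exactly that $H\to G(2)$ is nontrivial, so avoiding $g\equiv 1\bmod 2$ also excludes only a proper subgroup of $H$. For other $\ell\in S$ not dividing $j$ we need $H \to G(\ell)$ to be nontrivial; since $\Q(\E[\ell])\not\subseteq\Q(\E[j])$ (the degree of $\Q(\E[\ell])$ is divisible by $\ell$, or a ramification argument applies), this also excludes only a proper subgroup. A group is never a union of finitely many proper subgroups when those subgroups have coprime indices or come from distinct primes. Here I would use that the relevant quotients of $H$ are at distinct primes, so the image of $H$ in their product is large enough that a counting argument works. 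Concretely, $\#\{h: h\in K_\ell \text{ some }\ell\}\le\sum_\ell \#H/[H:K_\ell]$, and this sum is $<\#H$ except in small cases with index $2$ and $3$, which need separate handling.

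The main obstacle is the entanglement at the primes $2$ and $3$, together with the possibility that the kernel conditions at different primes of $S$ are correlated inside $H$. Because of that correlation, the union bound can fail when several indices equal $2$. To deal with this, I would show that the image of $H$ in $\prod_{\ell\mid j}GL_2(\Z/\ell^{v_\ell(j)+1})$ is the full product of the kernels. This uses Goursat's lemma and the fact that these kernels, for $\ell\ge 3$ odd and with full $\ell$-adic image, have no common nontrivial quotients with the $2$-part or with each other, since their abelianisations are $\ell$-groups. Independence then makes the excluded set a union of preimages of proper subgroups from pairwise coprime-order quotients, and its complement is nonempty by a product count.
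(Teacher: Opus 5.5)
\paragraph{The gap: odd primes of $S$ not dividing $j$.}
Your reduction is fine. The Haar-measure formula, splitting off the primes $\ell\nmid 2m_\E j$ (where the adelic image really is a direct product), and the resulting finite problem match Propositions~\ref{prop:cej-factor} and~\ref{prop:Cejiff}. That finite problem is: find $h\in H=\ker(G\to G(j))$ with nontrivial image in every $Q_\ell=\Gal(\Q(\E[j\ell])/\Q(\E[j]))$, $\ell\in S$.

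The gap is in solving it, at the odd primes $\ell\in S$ with $\ell\nmid j$, which you call ``harmless''. They carry the whole difficulty. For such $\ell$, $Q_\ell$ surjects onto $(\Z/\ell\Z)^\times$, which has even order. So these quotients are not of pairwise coprime order, and they can be entangled with one another and with $Q_2$.

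For instance, take $j=1$ and $\Q(\E[2])=\Q(\sqrt{-15})$. Then $H$ maps onto $\Gal(\Q(\sqrt{-3},\sqrt{5})/\Q)\cong C_2\times C_2$. Its three index-$2$ subgroups are preimages of proper subgroups of $Q_3$, $Q_5$ and $Q_2$ respectively. So $H$ \emph{is} a union of proper subgroups coming from distinct primes, contrary to the principle you invoke.

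Your union bound also fails: if $\Q(\E[3])=\Q(\zeta_3)$, the indices at $2$ and $3$ are both $2$. Your Goursat/independence step only treats $\ell\mid j$. So for $j=1$ your argument establishes nothing. That case is Theorem~\ref{thm:CE1}, whose proof in \cite{CoMu} broke down for essentially this reason (Section~\ref{sec:CoMu}). What rescues the example is specific structure: $(\Z/5\Z)^\times$ is cyclic of order $4$, so one can take $\det h\bmod 5$ to be a nontrivial square.

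\paragraph{The missing idea: control the odd primes through the determinant.}
One should control every odd prime of $S$ through the determinant, then run a single Goursat argument against the $2$-part. Reduce first to $j\mid m_\E$.

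First, $\rho_j$ is surjective by Proposition~\ref{prop:Avsm}(3). You also need this, and do not supply it, for your claim that $H$ surjects onto $(1+\ell^{v_\ell(j)}M_2(\Z_\ell))/(1+\ell^{v_\ell(j)+1}M_2(\Z_\ell))$ for $\ell\mid j$. $\ell$-adic surjectivity alone does not give that claim, and your pro-$\ell$ observation only shows the joint image is a product once each projection is onto.

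Since also $[GL_2(\Z/j\Z),GL_2(\Z/j\Z)]=SL_2(\Z/j\Z)$ for odd $j$, the maximal abelian subfield of $\Q(\E[j])$ is $\Q(\zeta_j)$. Hence $\Q(\E[j])\cap\Q(\zeta_R)=\Q(\zeta_j)$, with $R$ as in Theorem~\ref{T4b}. By Lemma~\ref{lem:cyclotomic-lemma}, the determinant then maps $H$ onto $G_2=\prod U_{j_\ell}(\ell)/U_{j_\ell+1}(\ell)$, the product over the odd $\ell\mid m_\E/j$. This is a product of nontrivial cyclic groups, and a nontrivial coordinate at $\ell$ already forces $h\not\equiv 1\bmod j\ell$.

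Now let $G_1=Q_2$, which is nontrivial by hypothesis, and let $T\subseteq G_1\times G_2$ be the image of $H$. Goursat gives an isomorphism $\phi\colon G_2/N_2\isomto G_1/N_1$. Suppose no element of $T$ is nontrivial in every coordinate. Then $\phi$ kills every $x\in G_2$ with all coordinates nontrivial. Every element of $G_2$ is a product of two such elements, or of three when $3\nmid j$ and the $\F_3^\times$-coordinate is $-1$. Hence $N_2=G_2$ and $T=G_1\times G_2$, a contradiction.

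This is the proof of Theorem~\ref{T4b}, and nothing in your proposal replaces it. Similarly, ``abelianisations are $\ell$-groups'' does not exclude a common $C_3$ quotient of $Q_3$ and $Q_2$ when $3\mid j$, for example if $\Q(\E[2])$ were the cubic subfield of $\Q(\zeta_9)$.
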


In fact, we prove something stronger which allows us to handle some cases where $j$ and $2A(\E)$ have common
factors. See Theorem~\ref{T4b} and the examples that follow. It is an interesting question, related to `entanglements'
and `coincidences' of division fields, to classify all pairs $\E, j$ with $\Cc_{\E,j} = 0$. We give some examples in
Section~\ref{sec:coincidences}. These examples all arise from coincidences $\Q(\E[j]) = \Q(\E[jp])$ with $p = 2$ or
$p = 3$ and we conjecture that this is the only way that $\Cc_{\E,j}$ can vanish.

Coincidences of division fields have recently been studied in (for example)~\cite{DLZ}, \cite{yvonCoincidencesDivisionFields2025}, and~\cite{danielsCoincidencesNilpotentDivision2026}. In
Section~\ref{sec:coincidences} we define a $p$-coincidence to be one of the form $\Q(\E[j]) = \Q(\E[jp])$ and conjecture
that, for non-CM elliptic curves over $\Q$, $p$-coincidences occur only for $p = 2$ and $p = 3$. In
Section~\ref{sec:abelian} we document some infinite families of examples of $p$-coincidences stemming from elliptic
curves with abelian mod $n$ Galois representations for $n = 2, 3, 4$, and $8$; in Section~\ref{sec:smorgasbord} we give
a few sporadic examples that we found interesting.

We restrict attention to elliptic curves over $\Q$, although many of our results would continue to hold over number
fields containing no nontrivial abelian extension of $\Q$. The vanishing of $\Cc_{\E,1}$ for elliptic curves over number fields has
been studied in~\cite{campagna_cyclic_2023}, while coincidences over number fields are the subject of
~\cite{yvonCoincidencesDivisionFields2025}.

\subsection{Notation}
We summarise some of the key notation used throughout this paper:
\begin{itemize}
    \item We let $\E$ be an elliptic curve over $\Q$ without complex multiplication.
    \item We write $N_\E$ for the conductor of $\E$, and $m_\E$, $A(\E)$ for its \emph{adelic level} and \emph{Serre constant} (defined in Section~\ref{sec:galois} below).
    \item We let $\rho_n$ (for $n \in \N$) and $\rho$ be the mod $n$ and adelic Galois representations associated to $\E$ (defined in Section~\ref{sec:galois} below). 
    \item We write $\mu$ for the M\"{o}bius function.
    \item For a positive integer $b$, let $\rad(b)$ be its radical and let $\langle b \rangle$ be the set of integers all of whose prime factors divide $b$.
    \item For a group $G$ let $[G,G]$ be its commutator subgroup.
    \item The cardinality of the group $GL_{2}(\mathbb{Z}/n\mathbb{Z})$ is given by \[\psi(n) = |GL_2(\Z/n\Z)| = n^4 \prod_{p|n} \left(1-\frac{1}{p}\right) \left(1-\frac{1}{p^2}\right).\]
\end{itemize}

\subsection{Acknowledgments and data availability}

AM was funded by a London Mathematical Society Undergraduate Research Bursary [grant reference URB-2023-11], by Durham University and by a studentship from the University of Edinburgh. In the final stages of preparing this article, JS was supported by the Engineering and Physical Sciences Research Council [grant number UKRI1020].

We thank Harris Daniels for useful conversations on the topic of this paper.

Supporting code is available at
\url{https://github.com/jackshotton/invariant-factors-public}. Much of this code was produced with the assistance of GPT-5.4, including that used for the analysis of the explicit example in Section~\ref{sec:abelian-2}. 

All data used in this work is freely available from the L-functions and Modular Forms Database~\cite{lmfdb}.

\subsection{Compliance with ethical standards}

We declare no conflicts of interest.

\section{Galois images}
\label{sec:galois}
Associated to $\E$, we have the \emph{adelic Tate module} 
\[T(\E) = \varprojlim_{n} \E[n].\]
It is a free $\Zhat$-module of rank 2, and we will fix a choice of basis. We then obtain a continuous representation
\[\rho : \GalQ \to GL_2(\Zhat)\]
where $\GalQ$ is the absolute Galois group of $\Q$. Let \[H = \im(\rho) \subset GL_2(\Zhat).\]

For each $n \in \N$, let 
\[\rho_n : \GalQ \to GL_2(\Z/n\Z)\]
be the mod-$n$ reduction of $\rho$. We introduce the following notation: 
\begin{itemize}
    \item $\bmod_n : GL_2(\Zhat) \to GL_2(\Z/n\Z)$ is the reduction mod $n$ map;
    \item $\Gamma_n = \ker(\bmod_n)$;
    \item for $G \subset GL_2(\Zhat)$, $G \bmod n = \bmod_n(G)$; 
\end{itemize}
If $G \subset GL_2(\Zhat)$ is an open subset, the \emph{level} of $G$ is the least integer $m$ such that $\Gamma_m \subset G$. 
We note now, and use without comment, that if $A, B \subset GL_2(\Zhat)$ are subsets that both contain the same $\Gamma_m$, then $A\cup B \bmod m = A\bmod m \cup B \bmod m$, and similarly for intersections.

As $\E$ does not have complex multiplication, $H$ is open by~\cite[Théorème 3]{Ser}, and we define the \emph{adelic level} $m_{\E}$ of $\E$ to be the level of $H$. 
For $k \in \N$ we have
\[[\Q(\E[k]):\Q] = |\rho_k(\GalQ)| = |H \bmod k| = |H/H\cap \Gamma_k|.\]
As in Theorem~\ref{C2}, for $j \in \N$ we define
\[\Cc_{\E, j} = \sum_{k=1}^\infty \frac{\mu(k)}{[\Q(\E[jk]):\Q]}.\]
The sum is absolutely convergent, as Serre's open image theorem implies that $[\Q(\E[k]):\Q]  \asymp \psi(k) \asymp k^4$ for $k$ squarefree.

Let $m\in \N$ be any integer divisible by the adelic level $m_\E$ (usually we will have $m = m_\E$).

\begin{lemma}\label{lem:FK} Suppose that $k \in \N$ and that $p$ is a prime such that $v_p(k) \ge v_p(m)$. Then 
\[|H\bmod pk| = |H\bmod k| \cdot \begin{cases} p^4 &\text{ if $p \mid k$} \\ \psi(p) & \text{if $p\nmid k$}.\end{cases}\]
\end{lemma}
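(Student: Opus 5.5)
The plan is to compute the index $|H\bmod pk| / |H\bmod k|$ as the size of the kernel of the reduction map $H \bmod pk \to H \bmod k$, and to show that this kernel is as large as possible, namely the full kernel of $GL_2(\Z/pk\Z)\to GL_2(\Z/k\Z)$. Concretely,
\[|H \bmod pk| = |H\bmod k|\cdot |(H\cap\Gamma_k)/(H\cap \Gamma_{pk})|,\]
so it suffices to show that $(H\cap \Gamma_k)\Gamma_{pk} = \Gamma_k$, i.e.\ that the image of $H\cap\Gamma_k$ in $\Gamma_k/\Gamma_{pk}$ is everything. The order of $\Gamma_k/\Gamma_{pk}$ is $\psi(pk)/\psi(k)$, which from the formula for $\psi$ equals $p^4$ if $p\mid k$ and $\psi(p)$ if $p \nmid k$. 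So the lemma reduces to the claim that every element of $\Gamma_k$ is congruent mod $pk$ to an element of $H\cap\Gamma_k$.

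To prove the claim, write $k = p^a k'$ and $m = p^b m'$ with $p\nmid k'm'$, so $a\ge b$ by hypothesis. Let $g\in\Gamma_k$. By the Chinese remainder theorem, $GL_2(\Zhat) = GL_2(\Z_p)\times \prod_{\ell\ne p} GL_2(\Z_\ell)$, and I will choose $h\in GL_2(\Zhat)$ whose $p$-component equals that of $g$ and whose $\ell$-components for $\ell \neq p$ are the identity. Then $h\equiv g \pmod{pk}$: at $p$ they agree, and at $\ell\ne p$ both are $\equiv 1 \bmod \ell^{v_\ell(k)}=\ell^{v_\ell(pk)}$ since $g\in\Gamma_k$. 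Moreover $h\in\Gamma_k$. Now I check $h\in\Gamma_m$: at $\ell\neq p$, $h_\ell = 1$; at $p$, $h_p = g_p \equiv 1 \bmod p^a$ and $a \ge b = v_p(m)$. Hence $h\in\Gamma_m\subset H$ since $m_\E \mid m$ and $H$ has level $m_\E$. So $h\in H\cap \Gamma_k$ reduces to $g \bmod pk$, proving surjectivity.

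The only step needing care is the bookkeeping that $h$ simultaneously lies in $\Gamma_m$ (using $v_p(k)\ge v_p(m)$ exactly at $p$, and the identity elsewhere) and agrees with $g$ modulo $pk$; the final computation of $|\Gamma_k/\Gamma_{pk}| = |\ker(GL_2(\Z/pk\Z)\to GL_2(\Z/k\Z))|$ is routine: when $p\mid k$ the kernel is $1+p^{a}M_2(\Z/p^{a+1}\Z)\cong M_2(\F_p)$ of order $p^4$, and when $p\nmid k$ it is $GL_2(\F_p)$ of order $\psi(p)$.
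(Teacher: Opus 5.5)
Your proof is correct. The paper gives no argument of its own here; it simply cites the Claim in Section~7 of \cite{FK14}. Your proposal is therefore a self-contained replacement for that citation rather than a variant of an argument in the text.

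The heart of your argument is the identity $\Gamma_k=\Gamma_{\lcm(m,k)}\,\Gamma_{pk}$. It follows from the product decomposition of $GL_2(\Zhat)$ exactly because $v_p(\lcm(m,k))=v_p(k)$ under the hypothesis $v_p(k)\ge v_p(m)$. Since $\Gamma_{\lcm(m,k)}\subset H\cap\Gamma_k$, this forces $(H\cap\Gamma_k)\Gamma_{pk}=\Gamma_k$. The count $|\Gamma_k/\Gamma_{pk}|=\psi(pk)/\psi(k)$, which uses the standard surjectivity of $GL_2(\Zhat)\to GL_2(\Z/n\Z)$, then finishes the proof.

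The citation buys brevity. Your version makes explicit that the lemma is purely group-theoretic: it holds for any closed subgroup of $GL_2(\Zhat)$ containing $\Gamma_m$, with no arithmetic input. It also shows that the hypothesis is used only at the single prime $p$. In particular, when $p\nmid k$ the hypothesis forces $p\nmid m$, so all of $GL_2(\Z_p)$, embedded with trivial components away from $p$, lies in $\Gamma_m\subset H$.
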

\begin{proof}
    This is the \emph{Claim} in~\cite{FK14} Section~7.
\end{proof}

\begin{prop}\label{prop:cej-factor}
Let $j \in \N$. Let $S$ be the set of prime numbers such that $v_p(j) < v_p(m)$. Then:
\begin{equation}\label{eq:cej-factor}\Cc_{\E,j} = \sum_{k \in \pres{S}} \frac{\mu(k)}{|H\bmod jk|} \prod_{p\not\in S, p \mid j}\left(1 - \frac{1}{p^4}\right)\prod_{p \not\in S, p \nmid j}\left(1 - \frac{1}{\psi(p)}\right) .\end{equation}
Furthermore, $\Cc_{\E, j} > 0$ if and only if $\Cc_{\E, \gcd(j,m)} > 0$.
\end{prop}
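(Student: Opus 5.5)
Every squarefree $k$ factors uniquely as $k = k_S k'$ with $k_S \in \pres{S}$ squarefree and $k'$ squarefree coprime to $S$. The plan is to show that $|H \bmod jk|$ factors multiplicatively along this decomposition, and then to expand the sum as an Euler-type product over the primes outside $S$.

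For the factorisation, we apply Lemma~\ref{lem:FK} repeatedly. Write $k' = p_1 \cdots p_r$ and add the primes $p_i$ one at a time to the modulus $jk_S$. At each stage the current modulus $n = jk_S p_1\cdots p_{i-1}$ satisfies $v_{p_i}(n) = v_{p_i}(j) \ge v_{p_i}(m)$, since $p_i \notin S$ and $p_i \nmid k_S p_1 \cdots p_{i-1}$. So the lemma applies and multiplies the size by $p_i^4$ if $p_i \mid j$, and by $\psi(p_i)$ otherwise. This gives
\[|H \bmod jk| = |H \bmod jk_S| \prod_{p \mid k',\, p\mid j} p^4 \prod_{p\mid k',\, p\nmid j}\psi(p).\]
Substituting this and using $\mu(k) = \mu(k_S)\mu(k')$, the sum splits as a product of the sum over $k_S \in \pres{S}$ and the sum over squarefree $k'$ coprime to $S$ of $\mu(k')/\prod(\cdots)$. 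The second sum equals the stated Euler product. Rearranging is legitimate because the original series converges absolutely (as noted after the definition of $\Cc_{\E,j}$), and the Euler product converges absolutely since $\sum 1/\psi(p) < \infty$. This proves \eqref{eq:cej-factor}.

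For the positivity statement, every factor $1-p^{-4}$ and $1-\psi(p)^{-1}$ is positive, and the infinite product converges to a positive number. Hence $\Cc_{\E,j}>0$ if and only if $\Sigma_j := \sum_{k\in\pres{S}} \mu(k)/|H\bmod jk|$ is positive. Here $S$ is a finite set of primes dividing $m$, so this is a finite sum. Now set $j_0=\gcd(j,m)$. Then $v_p(j_0) < v_p(m)$ if and only if $v_p(j) < v_p(m)$, so $j_0$ has the same set $S$.

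It then suffices to show $|H \bmod jk| = c\,|H\bmod j_0k|$ for a constant $c>0$ independent of $k \in \pres{S}$. This is the main step. We write $j = j_0 \prod_{q} q^{a_q}$, where for each prime $q$ with $a_q > 0$ we have $v_q(j_0)=v_q(m)$, so $q\notin S$ and $q \nmid k$. We then pass from $j_0k$ to $jk$ by multiplying by one prime factor $q$ at a time. Each intermediate modulus $n$ satisfies $v_q(n) \ge v_q(j_0) = v_q(m)$, so Lemma~\ref{lem:FK} applies. The factor it contributes ($q^4$, or $\psi(q)$ if $v_q(n)=0$, which in fact only happens when $v_q(m)=0$) depends only on $q$ and $v_q(n)$, not on $k$. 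So $c$ is a product of such factors, and $\Sigma_j = c^{-1}\Sigma_{j_0}$. Combined with the positivity of the Euler products for $j$ and for $j_0$, this yields the equivalence.
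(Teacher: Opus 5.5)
Your proposal is correct and follows the paper's proof essentially step by step: you split $|H \bmod jk|$ multiplicatively by repeated use of Lemma~\ref{lem:FK}, use positivity of the Euler product, and compare $|H\bmod jk| = c\,|H\bmod \gcd(j,m)k|$ with $c$ independent of $k \in \pres{S}$, using that $\gcd(j,m)$ has the same set $S$. Your explicit appeal to absolute convergence to justify the rearrangement, and your check that each intermediate modulus satisfies the hypothesis of Lemma~\ref{lem:FK}, are details the paper leaves implicit.
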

\begin{proof}
    If $n \in \N$ is squarefree and of the form $kn'$, where all the prime factors of $k$ are in $S$ and all the prime factors of $n'$ are not in $S$, then by Lemma~\ref{lem:FK} we have
    \[\frac{\mu(n)}{|H\bmod jn|} = \frac{\mu(k)}{|H \bmod jk|}\prod_{p\mid n', p\mid j} \frac{-1}{p^4}\prod_{p \mid n', p \nmid j}\frac{-1}{\psi(p)}.\]
    The factorisation~\eqref{eq:cej-factor} follows.

    As $\prod_{p}\left(1 - \frac{1}{\psi(p)}\right)$ is easily shown to be positive, we see that $\Cc_{\E,j} > 0$ if and only if \[\sum_{k \in \pres{S}} \frac{\mu(k)}{|H\bmod jk|} > 0.\]
    If $p$ is a prime that divides $\frac{j}{\gcd(j,m)}$ then $p \not \in S$ and so, for any $k \in \pres{S}$, $v_p(jk) = v_p(j) > v_p(m)$. So, by (repeated application of) Lemma~\ref{lem:FK}, \[|H \bmod jk| = A|H \bmod \gcd(j,m)k|\] for a positive integer $A$ independent of $k$. Thus 
    \[\sum_{k \in \pres{S}} \frac{\mu(k)}{|H\bmod jk|} = \frac{1}{A}\sum_{k \in \pres{S}} \frac{\mu(k)}{|H\bmod \gcd(j,m)k|}.\]
    Since $S = \{p : v_p(j) < v_p(m)\} = \{p : v_p(\gcd(j,m)) < v_p(m)\}$, this is positive if and only if $\Cc_{\E, \gcd(j,m)} > 0$ and we obtain the last part of the proposition.    
\end{proof}

In particular, if we are interested in when $\Cc_{\E,j} > 0$, we may restrict attention to integers $j$ that divide the adelic level $m_\E$.

\begin{prop}\label{prop:Cejiff} Suppose that $j \mid m$ and let $S = \{p : p \mid m/j\}$. Then $\Cc_{\E, j} = 0$ if and only if 
\[H \cap \Gamma_j \bmod m = \bigcup_{p \in S}H \cap \Gamma_{jp} \bmod m.\]
\end{prop}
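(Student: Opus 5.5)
By Proposition~\ref{prop:cej-factor}, $\Cc_{\E,j}$ vanishes exactly when the finite sum
\[\Sigma := \sum_{k \in \pres{S}} \frac{\mu(k)}{|H\bmod jk|}\]
vanishes. Here $S = \{p : v_p(j) < v_p(m)\}$, which for $j \mid m$ is exactly $\{p : p \mid m/j\}$, and only squarefree $k$ with prime factors in $S$ contribute. The plan is to reinterpret $\Sigma$ as an inclusion--exclusion count of elements of $H \bmod m$, and then use that a count of a set of nonnegative size is zero exactly when the set is empty.

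For $k \mid \rad(m/j)$ we have $jk \mid m$, so reduction gives a surjection $H \bmod m \to H \bmod jk$ whose kernel is $(H\cap\Gamma_{jk}) \bmod m$, since $\Gamma_m\subset H$. Hence
\[\frac{1}{|H \bmod jk|} = \frac{|H\cap \Gamma_{jk} \bmod m|}{|H\bmod m|}.\]
For distinct primes $p_1,\dots,p_r \in S$ we also have $\Gamma_{jp_1}\cap\cdots\cap\Gamma_{jp_r} = \Gamma_{jp_1\cdots p_r}$. Using the stated compatibility of reduction mod $m$ with intersections of sets containing $\Gamma_m$, we get
\[\bigcap_{p\mid k}\bigl(H\cap\Gamma_{jp} \bmod m\bigr) = H\cap \Gamma_{jk}\bmod m.\]

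Multiplying $\Sigma$ by $|H \bmod m|$ therefore gives
\[\sum_{k \mid \prod_{p\in S}p} \mu(k)\,\Bigl|\bigcap_{p\mid k} X_p\Bigr|, \qquad X_p := H\cap\Gamma_{jp}\bmod m,\]
with the $k=1$ term being $|X| = |H\cap\Gamma_j \bmod m|$. Each $X_p$ is a subset of $X$, so by inclusion--exclusion this sum equals
\[\Bigl|X \setminus \bigcup_{p\in S} X_p\Bigr|.\]
Thus $\Sigma = 0$ if and only if $X = \bigcup_{p\in S} X_p$, which is the claimed statement. The union on the right may also be computed before reducing mod $m$, by the same compatibility remark.

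The main obstacle is bookkeeping: checking that every $jk$ appearing divides $m$, so that reductions mod $m$ make sense and the $\Gamma$'s contain $\Gamma_m$. This holds because $k$ is squarefree with primes dividing $m/j$. One should also note that positivity of the Euler product factor in Proposition~\ref{prop:cej-factor} is what allows reducing to $\Sigma$. If $S$ is empty, the union is empty while $X$ contains the identity, consistent with $\Cc_{\E,j} > 0$.
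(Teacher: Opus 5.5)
Your proof is correct and follows essentially the same route as the paper. You reduce via Proposition~\ref{prop:cej-factor} to the vanishing of the finite sum, rewrite each term as a ratio of sizes of subsets of $H \bmod m$ using $jk \mid m$, and apply inclusion--exclusion to $H\cap\Gamma_{jk} \bmod m = \bigcap_{p\mid k} H\cap\Gamma_{jp}\bmod m$. The only difference is cosmetic: you normalise by $|H\bmod m|$ and count $X\setminus\bigcup_{p} X_p$ directly, whereas the paper multiplies by $|H\bmod j|$ and obtains $1 - |\bigcup_{p} X_p|/|X|$.
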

\begin{proof}
    By Proposition~\ref{prop:cej-factor}, $\Cc_{\E, j} = 0$ if and only if 
    \[\sum_{k \in \pres{S}} \frac{\mu(k)}{|H\bmod jk|} = 0.\]
    Multiplying this by $|H\bmod j|$ we see that this is equivalent to 
    \[\sum_{k \in \pres{S}} \mu(k)\left|\frac{H\cap \Gamma_{jk}}{H\cap \Gamma_j}\right| = 0\]
    and, since each $jk \mid m$, this is equivalent to 
    \[\sum_{k \in \pres{S}} \mu(k)\frac{|H\cap \Gamma_{jk} \bmod m|}{|H\cap \Gamma_j \bmod m|} = 0.\]
    For each $k \in \pres{S}$ squarefree, $H\cap \Gamma_{jk} \bmod m = \bigcap_{p \mid k} H \cap \Gamma_{jp} \bmod m$ and so, by the inclusion-exclusion principle, 
        \[\sum_{k \in \pres{S}} \mu(k)\frac{|H\cap \Gamma_{jk}\bmod m|}{|H\cap \Gamma_j \bmod m|} = 1 - \frac{1}{|H\cap \Gamma_j \bmod m|}\left| \bigcup_{p \in S} H \cap \Gamma_{jp}\bmod m\right|.\]
    The proposition follows.
\end{proof}

\begin{cor}\label{cor:meqj}
    Suppose that $m_\E \mid j$. Then $\Cc_{\E, j} > 0$.
\end{cor}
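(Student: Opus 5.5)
The plan is to reduce to the case $j = m$ and then read the answer off Proposition~\ref{prop:Cejiff}. Take $m = m_\E$. Since $m_\E \mid j$, we have $\gcd(j,m) = m$. The last part of Proposition~\ref{prop:cej-factor} then says that $\Cc_{\E,j} > 0$ if and only if $\Cc_{\E,m} > 0$. So it suffices to treat $j = m$.

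With $j = m$ we certainly have $j \mid m$, so Proposition~\ref{prop:Cejiff} applies. The relevant set of primes is $S = \{p : p \mid m/j\} = \{p : p \mid 1\} = \emptyset$. The union on the right-hand side of the criterion is therefore empty, and $\Cc_{\E,m} = 0$ would require
\[H \cap \Gamma_m \bmod m = \emptyset.\]
But $H\cap\Gamma_m$ contains the identity matrix, so its reduction mod $m$ contains the identity and is nonempty. Hence the criterion fails, and $\Cc_{\E,m} \neq 0$.

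It remains to upgrade $\Cc_{\E,m} \neq 0$ to positivity, and this is the one point to check carefully. We do not need a separate nonnegativity argument. When $S = \emptyset$, the only $k \in \pres{S}$ is $k=1$, so the formula \eqref{eq:cej-factor} reduces to
\[\Cc_{\E,m} = \frac{1}{|H \bmod m|}\prod_{p\mid m}\left(1-\frac{1}{p^4}\right)\prod_{p\nmid m}\left(1-\frac{1}{\psi(p)}\right).\]
Every factor here is positive, and the infinite product converges to a positive number because $\sum_p 1/\psi(p)$ converges. So $\Cc_{\E,m} > 0$, and by the reduction above $\Cc_{\E,j} > 0$.

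In fact the same computation applies directly to $j$ itself, without the reduction. Indeed $S = \{p : v_p(j) < v_p(m)\} = \emptyset$ whenever $m_\E \mid j$, so \eqref{eq:cej-factor} is already a single positive term times a convergent positive product.
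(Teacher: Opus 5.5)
Your proof is correct and is essentially the paper's argument: you reduce to $j = m_\E$ via the last part of Proposition~\ref{prop:cej-factor} and observe that $S = \emptyset$ in Proposition~\ref{prop:Cejiff}. Your extra step of reading off positivity, not just nonvanishing, from \eqref{eq:cej-factor} with $S=\emptyset$ makes explicit the nonnegativity that the paper leaves implicit in the inclusion--exclusion identity in the proof of Proposition~\ref{prop:Cejiff}; as you note, that formula alone already settles every $j$ with $m_\E \mid j$ directly.
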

\begin{proof}
   We may assume that $j = m_\E$ and take $m = m_\E$ in Proposition~\ref{prop:Cejiff}. Then the set $S$ is empty and so
   $\Cc_{\E, j} > 0$.
\end{proof}

\begin{cor} \label{cor:revdir}
    Let $j \in \N$. If there exists a prime $p$ such that $\Q(\E[j])=\Q(\E[pj])$, then $\Cc_{\E,j}=0$.
\end{cor}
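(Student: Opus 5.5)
The plan is to work directly with the defining series, pairing each squarefree $k$ coprime to $p$ with $pk$. Squarefree integers split into those $k$ with $p \nmid k$ and those of the form $pk$ with $p\nmid k$. Since $\mu(pk) = -\mu(k)$ for such $k$, absolute convergence lets us rearrange:
\[\Cc_{\E,j} = \sum_{\substack{k \ge 1\\ p\nmid k}} \mu(k)\left(\frac{1}{[\Q(\E[jk]):\Q]} - \frac{1}{[\Q(\E[jpk]):\Q]}\right).\]
It then suffices to show that $\Q(\E[jk]) = \Q(\E[jpk])$ for every $k$ coprime to $p$, since each bracket then vanishes.

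For this, note that $\Q(\E[jk])$ is the compositum $\Q(\E[j])\Q(\E[k])$ when $\gcd(j,k)$ is handled correctly. More precisely, $\E[\lcm(a,b)] = \E[a] + \E[b]$, so $\Q(\E[\lcm(a,b)])$ is the compositum of $\Q(\E[a])$ and $\Q(\E[b])$. Since $p \nmid k$, we have $jpk = \lcm(jp, jk)$. Therefore
\[\Q(\E[jpk]) = \Q(\E[jp])\,\Q(\E[jk]) = \Q(\E[j])\,\Q(\E[jk]) = \Q(\E[jk]),\]
using the hypothesis $\Q(\E[jp]) = \Q(\E[j])$ and $\Q(\E[j]) \subseteq \Q(\E[jk])$. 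This gives $\Cc_{\E,j} = 0$.

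An alternative route is via Proposition~\ref{prop:Cejiff}. In group terms, the hypothesis says $H\cap\Gamma_j = H\cap \Gamma_{jp}$, so if $p \in S$ the union condition holds trivially. However, that route requires reducing to $j \mid m$ and dealing with the case $p \notin S$. Lemma~\ref{lem:FK} rules out $p\notin S$, because then $|H \bmod jp| > |H \bmod j|$, contradicting the hypothesis. The direct series argument avoids all of this.

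The only delicate step is justifying the rearrangement, which follows from absolute convergence as noted after the definition of $\Cc_{\E,j}$. So I expect no real obstacle here.
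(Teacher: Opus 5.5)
Your proof is correct, but it takes a different route from the paper.

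The paper's proof is one line via Proposition~\ref{prop:Cejiff}. It takes $m = m_\E jp$, which is allowed because $m$ may be any multiple of $m_\E$. Then $j \mid m$ and $p$ automatically lies in $S = \{q : q \mid m/j\}$. The hypothesis gives $H\cap\Gamma_j \bmod m = \Gal(\Q(\E[m])/\Q(\E[j])) = H\cap\Gamma_{jp} \bmod m$, so the covering condition holds trivially. The complications you anticipate for that route (reducing to $j \mid m$, and excluding $p \notin S$ via Lemma~\ref{lem:FK}) therefore never arise. Your Lemma~\ref{lem:FK} remark is true, just unnecessary, since the choice of $m$ handles both points at once.

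Your argument works directly with the series instead. You pair $k$ with $pk$ for $p \nmid k$, and use $\Q(\E[\lcm(a,b)]) = \Q(\E[a])\Q(\E[b])$, which follows from $\E[a]+\E[b] = \E[\lcm(a,b)]$. This gives $\Q(\E[jpk]) = \Q(\E[jk])$, so every paired term cancels.

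Each approach has its advantage:
\begin{itemize}
\item Your route is self-contained and more elementary. It uses nothing about $H$ beyond absolute convergence of the series. It needs neither the adelic level, Lemma~\ref{lem:FK}, nor the inclusion--exclusion behind Proposition~\ref{prop:Cejiff}, so it applies verbatim wherever the series converges absolutely.
\item The paper's route is essentially free once the group-theoretic criterion is in place. It also places the corollary in the same framework used for the positivity results: a coincidence is exactly the degenerate case where one set in the union is already all of $H\cap\Gamma_j \bmod m$.
\end{itemize}
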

\begin{proof}
Take $m = m_\E jp$. As 
\[H \cap \Gamma_j \bmod m = \Gal(\Q(\E[m])/\Q(\E[j])),\] and similarly for $pj$, the hypothesis implies that \[H \cap \Gamma_{j} \bmod m = H \cap \Gamma_{pj} \bmod m\]
and so $\Cc_{\E,j} = 0$ by Proposition~\ref{prop:Cejiff}.
\end{proof}

Based on numerical evidence, we make the following conjecture.

\begin{conj}\label{conj:one-prime}
    Suppose that $\E$ is a non-CM elliptic curve over $\Q$ and that $\Cc_{\E,j} = 0$. Then there exists a prime $p$ such
    that $\Q(\E[j]) = \Q(\E[pj])$.
\end{conj}

\begin{remark}
  Equalities $\Q(\E[a]) = \Q(\E[b])$ are called `coincidences' in~\cite{DLZ}. For more discussion, see
  Section~\ref{sec:coincidences} below.
\end{remark}

\begin{prop}\label{prop:Cejzeroimplication}
  Suppose that $j \in \N$ and that $\Cc_{\E,j} = 0$. If $l$ is a prime of good reduction for $\E$, then $d_{\E,l} \ne j$
  unless $j = 1$ and $l = 2$.
\end{prop}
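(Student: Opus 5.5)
We argue by contraposition: assume $l$ is a prime of good reduction with $d_{\E,l} = j$, excluding $j=1,l=2$, and deduce $\Cc_{\E,j}>0$ from Proposition~\ref{prop:Cejiff}. Let $m = m_\E\, j\, l$ and $S = \{p : p \mid m/j\}$. By Proposition~\ref{prop:Cejiff} it suffices to find an element of $H\cap\Gamma_j \bmod m$ that lies in no $H\cap\Gamma_{jp}\bmod m$ for $p\in S$. The natural candidate is a Frobenius at $l$.

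We first treat $l\nmid m_\E j$, or more generally $l$ unramified in $\Q(\E[m'])$ for a suitable auxiliary level $m'$ coprime to $l$. Since $l$ is a prime of good reduction, reduction mod $l$ is injective on $\E[n]$ for $n$ coprime to $l$, and $\Frob_l$ acts on $\E[n]$ compatibly with the Frobenius of $\F_l$. Hence, for $n$ coprime to $l$, we have $\E[n]\subset \E(\F_l)$ if and only if $\rho_n(\Frob_l)=1$. Now $d_{\E,l}=j$ means $\E[j]\subset\E(\F_l)$ and $\E[jp]\not\subset\E(\F_l)$ for every prime $p$. In particular $j$ is coprime to $l$: otherwise $\E(\F_l)\supset\E[l]$, which is impossible because $\E[l](\bar\F_l)$ has order at most $l$.

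The main obstacle is that $l$ may divide $m$, so $\Frob_l$ is not defined on $\Q(\E[m])$. To handle this I would replace $m$ by its prime-to-$l$ part. Write $m = l^a m'$ with $l\nmid m'$. Apply Proposition~\ref{prop:Cejiff} with the $S$ attached to $m$; we need a $g\in H$ with $g\equiv 1 \pmod j$ and $g\not\equiv 1\pmod{jp}$ for each $p\in S$. For $p\ne l$, take the $m'$-component from $\rho_{m'}(\sigma)$, where $\sigma$ is a Frobenius element at $l$ (any lift in a decomposition group). Its image in $\Q(\E[m'])$ is well defined up to conjugacy, and it satisfies the condition $\not\equiv 1 \pmod{jp}$ because $\E[jp]\not\subset \E(\F_l)$ and $jp \mid m'$ is coprime to $l$.

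For $p=l$ we additionally need $g\not\equiv 1 \pmod{l}$, since $v_l(j)=0$. Here I would choose $\sigma$ inside the inertia-twisted decomposition group. Alternatively, use that $\rho(\sigma)$ on $\E[l]$ cannot be trivial: if it were, then $\Q(\E[l])$ would have $\Frob_l$ trivial along with all of inertia. But inertia at $l$ acts nontrivially on $\E[l]$, since $\Q(\E[l])\supset\Q(\zeta_l)$, which is ramified at $l$ for $l>2$. So any $\sigma$ in the decomposition group mapping to $\Frob_l$ on $\Q(\E[m'])$ can be adjusted by an inertia element to be nontrivial on $\E[l]$ without changing its image mod $m'$, because $\Q(\E[m'])$ is unramified at $l$. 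Then $g=\rho(\sigma)\bmod m$ witnesses $H\cap\Gamma_j\bmod m \ne \bigcup_{p\in S} H\cap\Gamma_{jp}\bmod m$, so $\Cc_{\E,j}>0$.

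The exception $j=1$, $l=2$ is exactly where this adjustment fails. There $\Q(\zeta_2)=\Q$, and $\E[2]$ may be rational, so no such $\sigma$ need exist. This matches the excluded case in the statement.
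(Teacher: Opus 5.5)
Your argument is essentially the paper's. You take a Frobenius element at $l$ and use good reduction together with $l\nmid j$ to place it in $H\cap\Gamma_j \bmod m$ but outside $H\cap\Gamma_{jp}\bmod m$ for every $p\ne l$. You then twist it by an inertia element at $l$ so that it is also nontrivial on $\E[l]$, and Proposition~\ref{prop:Cejiff} gives $\Cc_{\E,j}\neq 0$. The paper takes $m=m_\E j$, so that $l\in S$ only when $l\mid m_\E$, and argues by contradiction; your choice $m=m_\E jl$ with a direct contrapositive amounts to the same thing. For odd $l$ your proof is complete.

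The gap is the prime $l=2$. Your inertia adjustment relies on $\Q(\zeta_l)\subset\Q(\E[l])$ being ramified at $l$, which requires $l$ odd. For $l=2$ the field $\Q(\E[2])$ can be unramified at $2$; for instance $\rho_2$ is trivial when $\E$ has full rational $2$-torsion and good reduction at $2$. Then no choice of Frobenius lift $\sigma$ gives $\rho_2(\sigma)\ne 1$, and this failure has nothing to do with $j$. So your closing claim that $(j,l)=(1,2)$ is ``exactly'' where the adjustment fails is not justified. As written, your proof does not cover $l=2$ with $j>1$.

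The missing step is elementary. Since $l\nmid j$, $j$ is odd. Also $(\Z/j\Z)^2\subset\E(\F_2)$ forces $j^2 \mid \#\E(\F_2)\le 2+1+2\sqrt{2}<6$, so $j=1$. Hence for $l=2$ the hypothesis $d_{\E,2}=j$ already lands in the excluded case, which is how the paper disposes of it. Adding this line completes your proof.
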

\begin{proof}
  Let $l$ be a prime of good reduction such that $d_{\E, l} = j$. In particular, $(\Z/j\Z)^2 \subset \E(\F_l)$ which
  implies that $l \nmid j$. Suppose that $\Cc_{\E,j} = 0$.
  
 Let $m = m_\E j$. Then, by Proposition~\ref{prop:Cejiff}, 
 \[H \cap \Gamma_j \bmod m = \bigcup_{p \mid m_\E} H \cap \Gamma_{jp} \bmod m.\] Let $K = \Q(\E[m])$ so that
 $\Gal(K/\Q) = H \bmod m$. Fix a prime of $K$ above $l$ and let $I_l$ and $\tilde{\Frob}_l$ be the inertia
 group and lift of a Frobenius element.  Since $l$ is a prime of good reduction and $l \nmid j$, $I_l \subset H \cap
 \Gamma_j\bmod m$
 and $(\Z/j\Z)^2\subset \E(\F_l)$ if and only if $\tilde{\Frob}_l \in H \cap \Gamma_j \bmod m$. As $\Cc_{\E,j} = 0$, we
 have $\tilde{\Frob}_l \in H \cap \Gamma_{jp} \bmod m$ for some $p$.

 If $p = l$, suppose for now that $l$ is odd. Then $\Q(\E[jl])/\Q(\E[j])$ is ramified at primes above $l$ (since
 $\Q(\E[jl])$ contains $\zeta_l$ but $\Q(\E[j])$ is unramified at $l$). We may therefore choose $\tilde{\Frob}_l$ so
 that its image in $H\cap \Gamma_{j}/H\cap \Gamma_{jl}$ is nontrivial. For this choice of $\tilde{\Frob}_l$, we must
 therefore have $\tilde{\Frob}_l \in H \cap \Gamma_{jp} \bmod m$ for some $p \ne l$.

 But now $(\Z/jp\Z)^2 \subset \E(\F_l)$ and so $jp \mid d_{\E, l}$. Therefore, if $j \mid d_{\E,l}$ then
 $jp \mid d_{\E,l}$ for some $p$ and so $d_{\E,l} \ne j$.

 It remains to deal with the case $l = 2$. Since $|\E(\F_2)| \le 5$ and $|\E(\F_2)[2]| \le 2$, we must have $j = 1$. (By
 Theorem~\ref{thm:CE1}, this case occurs exactly when $\E$ has good reduction at 2 and full rational $2$-torsion.)
\end{proof}

Associated to $\E$ we also have the integers $A(\E)$, the product of all primes $p$ such that the $p$-adic Galois representation $\varprojlim \rho_{p^n}$ is not surjective, and $N_\E$, the conductor of $\E$. In the next proposition, we relate primes dividing $A(\E)$, $m_\E$ and $N_\E$. It may be well-known to experts, but we could not find a reference in the generality stated (including primes $p = 3, 5$).

\begin{prop}\label{prop:Avsm} Let $j$ be a positive integer and $p$ be a prime.
\begin{enumerate}
    \item If $p \mid A(\E)$ then $p \mid m_\E$.
    \item If $p \mid m_\E$ then $p \mid 2 N_\E A(\E)$.
    \item If $\gcd(j, 2A(\E)) = 1$ then $\rho_j$ is surjective.
\end{enumerate}
\end{prop}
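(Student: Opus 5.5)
Each part reduces to a standard fact about open subgroups of $GL_2(\Zhat)$, together with the fact that $\Q$ has no nontrivial unramified extensions.

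\textbf{Part (1).} Suppose $p \nmid m_\E$. Write $m_\E = m$ and note $\gcd(m,p)=1$. Since $\Gamma_m \subset H$, the $p$-adic component of $\Gamma_m$, namely $GL_2(\Z_p)$ (embedded in the $p$-factor of $GL_2(\Zhat)=\prod_\ell GL_2(\Z_\ell)$), lies in $H$. Its projection to $GL_2(\Z_p)$ is then surjective, so $p\nmid A(\E)$. Taking the contrapositive gives (1).

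\textbf{Part (3).} Let $j$ be coprime to $2A(\E)$. By definition of $A(\E)$, each $\ell \mid j$ is odd and $\rho_{\ell^\infty}$ is surjective. We must pass from surjectivity at each prime separately to surjectivity onto $\prod_{\ell \mid j} GL_2(\Z/\ell^{v_\ell(j)})$. I would use Goursat's lemma together with Serre's analysis of the common quotients of $GL_2(\Z_\ell)$ for distinct odd primes $\ell$.
\begin{itemize}
\item For $\ell \ge 5$, the group $SL_2(\Z_\ell)$ has Jordan--Hölder factors $PSL_2(\F_\ell)$ and $\ell$-groups, and is equal to its own commutator subgroup. Hence the commutator subgroups $SL_2$ at distinct primes share no nonabelian quotient, and the image of $H$ contains $\prod_{\ell\mid j} SL_2(\Z_\ell)$.
\item For $\ell=3$, $SL_2(\Z_3)$ is prosolvable, so the same argument needs care. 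I expect this to be the main obstacle. To handle it, I would use that the occurring simple factors differ: $SL_2(\F_3)$ has order $24$, with abelian factors of orders $2$ and $3$, while the other primes contribute $\ell$-groups and $PSL_2(\F_\ell)$. A common quotient would therefore have to be cyclic of order $2$ or $3$, which I would rule out using the commutator subgroup of $GL_2(\Z_3)$ and the fact that it is large.
\end{itemize}
Once the image contains $\prod_{\ell\mid j} SL_2$, the determinant is the cyclotomic character, which surjects onto $(\Z/j)^\times$. Therefore the image is all of $GL_2(\Z/j)$.

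\textbf{Part (2).} Suppose $p\nmid 2N_\E A(\E)$. Then $p$ is odd, $\E$ has good reduction at $p$, and $\rho_{p^\infty}$ is surjective. Write $H \subset H_p\times H'$, where $H_p = GL_2(\Z_p)$ is the projection to the $p$-factor and $H'$ is the projection to $\prod_{\ell\ne p}$. By Goursat's lemma, the failure of $H$ to contain $GL_2(\Z_p)\times 1$ is measured by a common quotient $Q$ of $GL_2(\Z_p)$ and $H'$. The field cut out by $Q$ lies both in $\Q(\E[p^\infty])$ and in $\Q(\E[n])$ for suitable $n$ prime to $p$. The latter extension is unramified at $p$, since $p \nmid N_\E$.

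Next, the quotient $Q$ is abelian or else involves $PSL_2(\F_p)$.
\begin{itemize}
\item If $p\ge5$, the $PSL_2(\F_p)$ factor cannot occur in $H'\subset\prod_{\ell\ne p}GL_2(\Z_\ell)$, by Serre's "Occ" argument, since $PSL_2(\F_p)$ is not a quotient of $GL_2(\Z_\ell)$ for $\ell \ne p$ when $p\ge5$.
\item For $p=3$, the nonabelian part is handled as in part (3).
\end{itemize}
So $Q$ factors through the abelianisation of $GL_2(\Z_p)$, which for $p$ odd is given by the determinant. The corresponding field is therefore inside $\Q(\zeta_{p^\infty})$, which is totally ramified at $p$. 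It is also unramified at $p$, so it equals $\Q$ by Minkowski. Hence $H \supset GL_2(\Z_p)\times 1$, and therefore $H\supset \Gamma_{m'}$ for $m' = m_\E/p^{v_p(m_\E)}$. Since $m_\E$ is the least such level, $p\nmid m_\E$.
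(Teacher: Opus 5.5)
\textbf{Parts (1) and (3).} Part (1) is the same as the paper's argument. Part (3) takes a different route from the paper and is sound once tidied up. The derived subgroup of $GL_2(\Z/\ell^a\Z)$ is $SL_2(\Z/\ell^a\Z)$ for odd $\ell$, so $[\bar H,\bar H]$ surjects onto each $SL_2(\Z/\ell^a\Z)$. Goursat then only needs the $SL_2$ factors at distinct primes to have no common nontrivial \emph{quotient*}; it extends to several factors because a simple quotient of a product is a simple quotient of one factor. The prime $3$ is no obstacle. $SL_2(\Z/\ell^a\Z)$ is perfect for $\ell\ge5$, while $SL_2(\Z/3^a\Z)$ is solvable, so they share no nontrivial quotient. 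Your ``cyclic of order 2 or 3'' discussion is muddled, and $C_2$ is also a Jordan--H\"older factor of $SL_2(\Z_\ell)$. The paper instead applies Goursat to the $GL_2$ factors, where $SL_2(\Z/p^r\Z)$ only maps onto a normal subgroup of the common quotient. It therefore needs $PSL_2(\F_p)\notin\mathrm{Occ}(GL_2(\Z/j'\Z))$, which fails for $p=5$ when $j'$ has a prime factor $\equiv\pm1\bmod 5$; hence its induction on the largest prime. Your commutator route avoids this.

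\textbf{Part (2) has a genuine gap.} The step excluding $PSL_2(\F_p)$ uses the wrong criterion. $H'$ is only a \emph{subgroup} of $\prod_{\ell\ne p}GL_2(\Z_\ell)$, so the Jordan--H\"older factors of its quotients are subquotients of the $GL_2(\Z_\ell)$. The fact that $PSL_2(\F_p)$ is not a quotient of $GL_2(\Z_\ell)$ is therefore irrelevant. For $p=5$ the statement you actually need is false, since $PSL_2(\F_5)\cong A_5$ embeds in $PSL_2(\F_\ell)$ whenever $\ell\equiv\pm1\bmod 5$.

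For $p=3$ your dichotomy ``$Q$ is abelian or involves $PSL_2(\F_p)$'' fails outright. $GL_2(\F_3)\to PGL_2(\F_3)\cong S_4\to S_3$ is a nonabelian quotient not involving $A_4$. Moreover, $S_3\cong GL_2(\F_2)$ can certainly be a quotient of $H'$ through $\Q(\E[2])$. Part (3) does not transfer, because $H'$ contains the $2$-adic factor and the factors at primes dividing $A(\E)$, about which nothing is known. (For $p\ge7$ your argument can be rescued by Serre's result that $PSL_2(\F_p)\notin\mathrm{Occ}(GL_2(\Z_\ell))$ for $\ell\ne p$.)

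The missing idea is to use ramification at the start, as a structural constraint, not only on the abelian part at the end. Let $N\subset GL_2(\Z/p^r\Z)$ be the Goursat kernel. Its fixed field $\Q(\E[p^r])\cap\Q(\E[m'])$ is unramified at $p$, while $\Q(\zeta_{p^r})$ is totally ramified, so $N\cdot SL_2(\Z/p^r\Z)=GL_2(\Z/p^r\Z)$. This alone forces $N=GL_2(\Z/p^r\Z)$ by local group theory, with no input about $H'$:
\begin{itemize}
\item A nontrivial abelian quotient of $GL_2(\Z/p^r\Z)/N$ factors through $\det$, so its kernel contains $N\cdot SL_2(\Z/p^r\Z)$, a contradiction. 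This settles $p=3$, where every quotient is solvable.
\item A nonabelian simple quotient $S$ (so $p\ge5$) is, via Serre's lemma, a quotient of $PGL_2(\F_p)$ onto which $PSL_2(\F_p)$ maps isomorphically. That would give $PGL_2(\F_p)\cong PSL_2(\F_p)\times C_2$, which is false.
\end{itemize}
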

\begin{proof}
\begin{enumerate}
    \item If $p \nmid m_\E$ then $\Gamma_{m_\E} \to GL_2(\Z_p)$ is surjective and so $p \nmid A(\E)$. 
    
    \item Suppose that $p \nmid 2N_\E A(\E)$. We want to show that $p \nmid m_\E$. Let $m_\E = p^rm'$ with $p\nmid m'$. Then 
    \[H \bmod m_\E = \im(\rho_{m_\E}) \subset GL_2(\Z/p^r\Z) \times GL_2(\Z/m'\Z)\]
    where the projection to the first factor is surjective as $p \nmid A(\E)$.
   Let $N\times \{I\}$ be the kernel of $H\bmod m_\E \to H\bmod m'$. The conclusion $p \nmid m_\E$ follows if we can show that $N = GL_2(\Z/p^r\Z)$; so suppose that $N$ is a proper subgroup.
   
   We have that
   \[\Q(\E[p^r])^N = \Q(\E[p^r]) \cap \Q(\E[m'])\subset \Q(\E[m'])\]
    is unramified at $p$, as $p \nmid N_\E$ and $p\nmid m'$. As 
    \[\Q(\E[p^r])^{SL_2(\Z/p^r\Z)} = \Q(\zeta_{p^r})\]
    is totally ramified at $p$, we see that $N \cdot SL_2(\Z/p^r\Z)=GL_2(\Z/p^r\Z)$.

    Suppose first that $GL_2(\Z/p^r\Z)/N$ admits a nontrivial abelian quotient. Then there is $K$ with $N \subset K \subset GL_2(\Z/p^r\Z)$ and $GL_2(\Z/p^r\Z)/K$ abelian. By~\cite[Lemma 7.7]{zywinaExplicitOpenImages2024} and the assumption $p > 2$, $SL_2(\Z/p^r\Z)\subset K$. Then 
    \[N\cdot SL_2(\Z/p^r\Z) = GL_2(\Z/p^r\Z) \subset N\cdot K = K,\] contradicting that the quotient was nontrivial. 

    Therefore $GL_2(\Z/p^r\Z)/N$ has a simple nonabelian quotient $S$, which implies that $p \ge 5$. Let $K\supset N$ be
    the corresponding maximal normal subgroup of $GL_2(\Z/p^r\Z)$. If $K \bmod p = GL_2(\F_p)$ then, as $p \ge 5$,
    ~\cite[IV~Lemma~3]{serre_abelian_1968} shows that $SL_2(\Z/p^r\Z) \subset K$, contradicting that $S$ is
    nonabelian. So
    \[GL_2(\Z/p^r\Z)/K \cong GL_2(\F_p)/(K \bmod p).\] Since $S$ is simple, we have
    $(K\bmod p)\cdot \F_p^\times = K\bmod p$ or $GL_2(\F_p)$. In the second case, $S$ would be abelian. So we are in the
    first case, and $S$ is a quotient of $PGL_2(\F_p)$. The natural map of simple groups $PSL_2(\F_p) \to S$ must then
    be an isomorphism, whence $PGL_2(\F_p) \cong PSL_2(\F_p) \times C_2$; this is false, so we have a contradiction.
    
    \item If $\gcd(j,30A(\E)) = 1$ then this is due to Serre, see the appendix to~\cite{cojocaruSurjectivityGaloisRepresentations2005}. 

    Write $\bar{H}$ for $H \bmod j$. We argue by induction on the largest prime factor $p$ of $j$. If $j$ is a power of 3 then the result is true by definition of $A(\E)$. Suppose, then, that $p > 3$ and write $j = p^rj'$ where $r \ge 1$ and $j'$ is coprime to $p$. The inductive hypothesis is that $\rho_{j'}$ is surjective. 

    Then $\bar{H}$ is a subgroup of $GL_2(\Z/p^r\Z) \times GL_2(\Z/j'\Z)$ such that $\bar{H}$ surjects onto each factor and such that $\det|_{\bar{H}}$ is surjective.
    By Goursat's lemma, there are normal subgroups $M \subset GL_2(\Z/p^r\Z)$ and $N \subset GL_2(\Z/j'\Z)$ such that $M \times N \subset \bar{H}$ and $\bar{H}/(M\times N)$ is the graph of an isomorphism 
    \[\phi : GL_2(\Z/p^r\Z)/M \isomto GL_2(\Z/j'\Z)/N.\]
    If $SL_2(\Z/p^r\Z)\subset M$, then $GL_2(\Z/j'\Z)/N$ is abelian and so $SL_2(\Z/j'\Z) \subset N$ by (for example)~\cite[Lemma~7.7]{zywinaExplicitOpenImages2024}. The surjectivity of $\det|_{\bar{H}}$ now forces $M = GL_2(\Z/p^r\Z)$ and $N = GL_2(\Z/j'\Z)$, as required.

    So we may assume that $SL_2(\Z/p^r\Z) \not \subset M$ and $\phi$ restricts to an injective homomorphism from the nontrivial group $SL_2(\Z/p^r\Z)/(M \cap SL_2(\Z/p^r\Z))$ to a normal subgroup of  $GL_2(\Z/j'\Z)/N$. It follows from~\cite[IV Lemmas~2 and~3]{serre_abelian_1968}, using $p \ge 5$, that the only simple quotient of $SL_2(\Z/p^r\Z)$ is $PSL_2(\F_p)$. Thus $PSL_2(\F_p)$ occurs as a subquotient of $GL_2(\Z/j'\Z)$. But either $p > 5$ and $j'$ is coprime to $p$, or $p = 5$ and $j'$ is a power of three. In either case, this is impossible (see~\cite[IV p.\ 25]{serre_abelian_1968}).\qedhere
\end{enumerate}
\end{proof}
\section{A positivity criterion}
\label{sec:positivity}

We now prove our main positivity criterion for $\Cc_{\E,j}$. To ease notation we redefine $H$ to be the image of the mod $m_\E$ Galois representation:
\[\rho_{m_\E} : \GalQ \onto H \subset GL_2(\Z/m_\E\Z).\]
If $k \mid m_\E$, then define 
\[\Dc_k : H \to (\Z/k\Z)^\times\]
to be the composition of determinant map with reduction modulo $k$. Since the determinant of $\rho_{m_\E}$ is the cyclotomic character, we can also see $\Dc_k$ as the restriction map 
\[\Gal(\Q(\E[m_\E])/\Q) \to \Gal(\Q(\zeta_k)/\Q).\]
\begin{lemma}\label{lem:cyclotomic-lemma}
    Suppose that $j, k \in \N$ with $jk \mid m_\E$. Then 
    \[\Q(\E[j]) \cap \Q(\zeta_{jk}) = \Q(\zeta_j)\]
    if and only if 
    \[\Dc_{jk}(H \cap \Gamma_j) = (1 + j\Z/jk\Z) \subset (\Z/jk\Z)^\times.\]
\end{lemma}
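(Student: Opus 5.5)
This is a Galois-correspondence argument inside $G = \Gal(\Q(\E[m_\E])/\Q) = H$. Set $L = \Q(\E[j])$ and $F = \Q(\zeta_{jk})$, both subfields of $\Q(\E[m_\E])$ since $jk \mid m_\E$. The subgroup fixing $L$ is $H\cap\Gamma_j$ (read modulo $m_\E$), and the subgroup fixing $F$ is $\ker \Dc_{jk}$. The key observation is that restriction to $F$ is exactly $\Dc_{jk}$, by the identification of $\Dc_k$ with restriction to cyclotomic fields stated just before the lemma. Hence
\[\Dc_{jk}(H\cap\Gamma_j) = \Gal(F/L\cap F),\]
viewed as a subgroup of $\Gal(F/\Q) = (\Z/jk\Z)^\times$. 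This is the standard fact that the image of $\Gal(\bar{\Q}/L)$ in $\Gal(F/\Q)$ is $\Gal(F/L\cap F)$, valid because $F/\Q$ is Galois.

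Next we compute the subgroup of $(\Z/jk\Z)^\times$ attached to $\Q(\zeta_j)$. Under the identification $\Gal(\Q(\zeta_{jk})/\Q)\cong(\Z/jk\Z)^\times$, the automorphism $\zeta_{jk}\mapsto\zeta_{jk}^a$ fixes $\zeta_j = \zeta_{jk}^k$ if and only if $a\equiv 1 \pmod j$. So $\Gal(F/\Q(\zeta_j)) = 1+j\Z/jk\Z$.

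Finally we combine the two. Since $\Q(\zeta_j)\subset L$ (by the Weil pairing, or equivalently since $\det$ is the cyclotomic character), we always have $\Q(\zeta_j)\subset L\cap F$, and so $\Dc_{jk}(H\cap\Gamma_j)\subset 1+j\Z/jk\Z$. By the Galois correspondence for $F/\Q$, the equality $L\cap F=\Q(\zeta_j)$ holds if and only if $\Gal(F/L\cap F)=\Gal(F/\Q(\zeta_j))$, that is, if and only if $\Dc_{jk}(H\cap\Gamma_j) = 1+j\Z/jk\Z$.

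There is no real obstacle. The only point needing care is making sure that reducing $H\cap\Gamma_j$ modulo $m_\E$ really gives $\Gal(\Q(\E[m_\E])/\Q(\E[j]))$ under the redefinition of $H$ as the mod $m_\E$ image. This holds because $j\mid m_\E$: with $H$ now a subgroup of $GL_2(\Z/m_\E\Z)$, $\Gamma_j$ means the elements congruent to $I$ mod $j$, and these are precisely the automorphisms fixing $\E[j]$ pointwise.
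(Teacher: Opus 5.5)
Your proposal is correct and follows the paper's proof: both identify $\Dc_{jk}(H\cap\Gamma_j)$ with $\Gal(\Q(\zeta_{jk})/(\Q(\zeta_{jk})\cap\Q(\E[j])))$ via restriction, then compare it with $\Gal(\Q(\zeta_{jk})/\Q(\zeta_j)) = 1+j\Z/jk\Z$ using the Galois correspondence. The paper states this in two lines; you simply spell out the details.
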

\begin{proof}
    The image of the map 
    \[\Dc_{jk} : H  \cap \Gamma_j = \Gal(\Q(\E[m_\E])/\Q(\E[j])) \to \Gal(\Q(\zeta_{jk})/\Q) = (\Z/jk\Z)^\times\]
    is $\Gal(\Q(\zeta_{jk})/(\Q(\zeta_{jk}) \cap \Q(\E[j])))$. The lemma follows, as $\Gal(\Q(\zeta_{jk})/\Q(\zeta_j)) = 1 + j\Z/jk\Z$.
\end{proof}

\begin{theorem} \label{T4b}
    Let $j \mid m_\E$ be a natural number, let $R$ be the product of $j$ with the odd prime factors of $m_\E/j$.
    
    If $\mathbb{Q}(\E[j]) \cap \mathbb{Q}(\zeta_{R}) =\mathbb{Q}(\zeta_{j}) $ and $\mathbb{Q}(\E[2j]) \neq \mathbb{Q}(\E[j])$, then $\mathcal{C}_{\E,j}>0$.
\end{theorem}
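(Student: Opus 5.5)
We apply Proposition~\ref{prop:Cejiff} with $m = m_\E$, so that $S$ is the set of primes dividing $m_\E/j$. Write $G = H \cap \Gamma_j$ (inside $H \subset GL_2(\Z/m_\E\Z)$) and $A_p = H \cap \Gamma_{jp}$ for $p \in S$. The goal is to show that $G \neq \bigcup_{p \in S} A_p$, i.e.\ to exhibit an element of $G$ lying in no $A_p$. The plan is to handle the odd primes of $S$ with the determinant, and the prime $2$ (if it lies in $S$) with the non-coincidence hypothesis, and then to combine the two by a generation argument.

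\emph{Odd primes.} Since $R \mid m_\E$, Lemma~\ref{lem:cyclotomic-lemma} (with $k = R/j$) turns the hypothesis $\Q(\E[j]) \cap \Q(\zeta_R) = \Q(\zeta_j)$ into the statement that $\Dc_R(G) = C := 1 + j\Z/R\Z$. By the Chinese remainder theorem, $C \cong \prod_{p \in S \text{ odd}} C_p$, where $C_p$ is the kernel of reduction from $(\Z/p^{v_p(j)+1}\Z)^\times$ to $(\Z/p^{v_p(j)}\Z)^\times$. Each $C_p$ is nontrivial: it has order $p$ if $p \mid j$ and order $p-1 \ge 2$ otherwise. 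If $g \in A_p$ for an odd $p$, then $\det g \equiv 1 \bmod jp$, so the $p$-component of $\Dc_R(g)$ is trivial. Hence
\[\bigcup_{p \text{ odd}} A_p \subset \Dc_R^{-1}(U),\]
where $U \subset C$ is the set of elements having some trivial coordinate. Put $Y = G \cap \Dc_R^{-1}(C \setminus U)$. This set is nonempty, it avoids every odd $A_p$, and it is a full preimage of $C\setminus U$ under the surjection $\Dc_R|_G : G \onto C$.

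\emph{The prime $2$.} If $2 \notin S$, then any element of $Y$ lies in no $A_p$, and we are done. If $2 \in S$, then $2j \mid m_\E$, so $A_2 = \Gal(\Q(\E[m_\E])/\Q(\E[2j]))$. The hypothesis $\Q(\E[2j]) \ne \Q(\E[j])$ therefore says that $A_2$ is a proper subgroup of $G$. It suffices to show $Y \not\subset A_2$. Suppose instead that $Y \subset A_2$; then the subgroup $\langle Y\rangle$ lies in $A_2$. The key claim is that $(C\setminus U)\cdot(C \setminus U) = C$. To see this, write any $c \in C$ componentwise as $c_p = a_p b_p$ with $a_p, b_p \neq 1$. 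This is possible when $|C_p| \ge 3$ by choosing $a_p \notin \{1, c_p\}$. When $|C_p| = 2$, take $a_p = b_p$ equal to the nontrivial element if $c_p = 1$, and otherwise note that $c_p\ne 1$ forces a choice we must check — so if $|C_p|=2$ and $c_p \neq 1$, use three factors instead, noting $C\setminus U$ is closed under inverses, so $\langle C\setminus U\rangle = C$ regardless. Since $Y$ is a full preimage under $\Dc_R|_G$, it contains $\ker(\Dc_R|_G)$-cosets, so $\langle Y \rangle$ is a subgroup containing $\ker(\Dc_R|_G)$ and surjecting onto $\langle C \setminus U\rangle = C$. Hence $\langle Y\rangle = G$, giving $G \subset A_2$, a contradiction.

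Thus some element of $Y \setminus A_2$ lies in $G$ but in no $A_p$, and Proposition~\ref{prop:Cejiff} gives $\Cc_{\E,j} > 0$. The main point to get right is the generation step: one must check carefully that $C\setminus U$ generates $C$ even when some $C_p$ has order $2$ (which happens for $p = 3 \nmid j$). This holds because every $C_p$ is nontrivial, so $C \setminus U$ is nonempty and, being a product of the nonidentity sets of the factors, generates their product.
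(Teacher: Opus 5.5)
Your argument is correct and is essentially the paper's. The odd primes are handled by the surjectivity of $\Dc_R$ on $H\cap\Gamma_j$ (Lemma~\ref{lem:cyclotomic-lemma}). The prime $2$ is handled by showing that elements whose odd determinant coordinates are all nontrivial generate $H\cap\Gamma_j$. The paper phrases the second step through Goursat's lemma on the image in $G_1\times G_2$, proving $\ker(p_1)=G_2$. You instead note directly that $\langle Y\rangle$ contains $\ker(\Dc_R|_G)$ and hence equals $G$. That is a slightly more economical packaging of the same idea.

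The generation step, however, is not justified as you wrote it. Three things go wrong:
\begin{itemize}
\item The claim $(C\setminus U)\cdot(C\setminus U)=C$ is false when $|C_3|=2$, since the nontrivial element of $C_3$ is not a product of two nontrivial elements.
\item The remark that $C\setminus U$ is closed under inverses says nothing about generation.
\item Your closing reason, that a product of nonidentity sets generates the product, is false in general: in $C_2\times C_2$ the set $\{(\epsilon,\epsilon)\}$ generates only the diagonal.
\end{itemize}

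What actually makes it work here is that $|C_p|=2$ forces $p=3$ with $3\nmid j$. So at most one factor has order $2$, and every other factor has order at least $3$. There are two cases:
\begin{itemize}
\item If there is no factor of order $2$, or $c$ has trivial coordinate in it, then $c$ is a product of two elements of $C\setminus U$. In a factor of order at least $3$, pick $a_q\notin\{1,c_q\}$; in $C_3$, use $\epsilon\cdot\epsilon$.
\item If $c$ has nontrivial $C_3$-coordinate, write $c=wyz$ with $w,y,z\in C\setminus U$. Take $w_3=y_3=z_3=\epsilon$, choose $w_q\neq 1$ arbitrarily for $q\neq 3$, and split $c_qw_q^{-1}$ into two nontrivial factors.
\end{itemize}
This is exactly the case distinction in the last paragraph of the paper's proof. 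With it in place of your final justification, your proof is complete.
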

\begin{proof}
    Write $m_\E/j = \prod_{i=1}^r p_i^{a_i}$ with $a_i \in \N$ and $p_i$ distinct primes. Let $m_i = v_{p_i}(m_\E)$ and $j_i = v_{p_i}(j)$, so that $a_i = m_i - j_i$. 

    For any integer $n\ge 0$, set 
    \[K_n(p_i) = \begin{cases}
        I + p_i^nM_2(\Z/p_i^{m_i}\Z) & \text{if $n > 0$, and}\\
        GL_2(\Z/p_i^{m_i}\Z) & \text{if $n = 0$.}
    \end{cases}\]
    Similarly, set 
     \[U_n(p_i) = \begin{cases}
        1 + p_i^n\Z/p_i^{m_i}\Z & \text{if $n > 0$, and}\\
        (\Z/p_i^{m_i}\Z)^\times & \text{if $n = 0$.}
    \end{cases}\]

    Then $H \cap \Gamma_j \subset \prod_{i=1}^r K_{j_i}(p_i)$ and to show that $\Cc_{\E,j} > 0$ it is enough, by Proposition~\ref{prop:Cejiff}, to show that 
    \[H \cap \Gamma_j \ne \bigcup_{i=1}^r H \cap \Gamma_{p_ij}.\]
    This is equivalent to showing that there is $h \in H \cap \Gamma_j$ such that $\pi_i(h) \ne e$ for all $1 \le i \le r$, where 
    \[\pi_i : H \cap \Gamma_j \to K_{j_i}(p_i)/K_{j_i + 1}(p_i)\]
    is the projection.

    Suppose first that $2 \not \in \{p_1, \ldots, p_r\}$. By our hypothesis and Lemma~\ref{lem:cyclotomic-lemma}, 
    \[\det : H \cap \Gamma_j \to \prod_{i=1}^r U_{j_i}(p_i)/U_{j_i+1}(p_i)\]
    is surjective. Since each factor in this product is nontrivial, there is $h \in H \cap \Gamma_j$ whose image in every factor is not the identity. This is the element $h$ we need.

    Otherwise, suppose without loss of generality that $p_1 = 2$. The image $G_1$ of $H\cap \Gamma_j$ in $K_{j_1}(2)/K_{j_1 + 1}(2)$ is isomorphic to $\Gal(\Q(\E[2j])/\Q(\E[j]))$, and hence nontrivial by hypothesis.

    Let $G_2 = \prod_{i=2}^rU_{j_i}(p_i)/U_{j_i + 1}(p_i)$ and consider the homomorphism 
    \[\theta : H \cap \Gamma_j \to G_1 \times \left(\prod_{i=2}^r U_{j_i}(p_i)/U_{j_i + 1}(p_i)\right) = G_1 \times G_2\]
    and let $T = \im(\theta)$. It is enough to show that there is $(t_1, \ldots, t_r) \in T$ such that no $t_i$ is the identity.
    
    The projection $p_1$ of $T$ onto $G_1$ is surjective by definition, while the projection $p_2$ of $T$ onto $G_2$ is
    surjective by Lemma~\ref{lem:cyclotomic-lemma}. Let $N_1 = \ker(p_2) \subset G_1$ and let
    $N_2 = \ker(p_1) \subset G_2$. By Goursat's Lemma, $N_1 \times N_2 \subset T$ and the image of $T$ in
    $G_1/N_1 \times G_2 / N_2$ is the graph of an isomorphism $\phi : G_2/N_2 \to G_1/N_1$.

    Suppose that $x = (x_2, \ldots, x_r) \in G_2$ with $x_i \ne e$ for every $2 \le i \le r$. Then $\phi(x) = e$
    otherwise $(\phi(x), x_2, \ldots, x_r)$ would be the required element $t$. So $N_2$ contains every element of this
    form.

    Now let $x = (x_2, \ldots, x_r) \in G_2$ be arbitrary. For each $2 \le i \le r$ we may choose $y_i$ and $z_i$ in
    $U_{j_i}(p_i)/U_{j_i+1}(p_i)$ such that $y_i z_i = x_i$ and $y_i, z_i \ne e$, \emph{unless} $p_i = 3$, $j_i = 0$ and
    $x_i = 2$. Suppose for now that this does not happen. Then taking $y = (y_2, \ldots, y_r)$ and
    $z = (z_2, \ldots, z_r)$, we have $x = yz$ and, by the previous paragraph, $y, z \in N_2$. So $x \in N_2$. But then,
    as $x$ was arbitrary, $N_2 = G_2$ and so $T = G_1 \times G_2$, in which case we can easily choose $t \in T$ that is
    not the identity in any factor.

    If there is some $i$ with $p_i = 3$, $j_i = 0$ and $x_i = 2$ then, without loss of generality, $i = 2$. We then take
    $w = (2,w_2, \ldots, w_r)$, $y = (2, y_2, \ldots, y_r)$ and $z = (2, z_2, \ldots, z_r)$ where all
    $w_i, y_i, z_i \ne e$ and $w_iy_iz_i = x_i$ (for example, take $w_i \ne e$ arbitrary and then find $y_i, z_i \ne e$
    such that $y_i z_i = x_i w_i^{-1}$). Then $w, y, z \in N_2$ and $x = wyz$; we conclude as before.
\end{proof}

\subsection{Corollaries}
\label{sec:corollaries}
As our first application of Theorem~\ref{T4b}, we give a proof of 
 Theorem~\ref{thm:CE1} of the introduction. This is due to Serre (see \cite{serreResumeDesCours}); see Remark~\ref{rem:CE1-proofs} for a discussion of the proofs in the literature.
\begin{cor}\label{cor:CE1} If $\Q(\E[2]) \ne \Q$ then $\Cc_{\E,1} > 0$.
\end{cor}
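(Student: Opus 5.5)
This is the special case $j = 1$ of Theorem~\ref{T4b}. We check the two hypotheses of that theorem with $j = 1$, which divides $m_\E$ trivially.

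With $j=1$, the integer $R$ is the product of the odd prime factors of $m_\E$. The first hypothesis asks that
\[\Q(\E[1]) \cap \Q(\zeta_R) = \Q(\zeta_1).\]
Since $\Q(\E[1]) = \Q$ and $\Q(\zeta_1) = \Q$, both sides equal $\Q$, so this holds automatically. The second hypothesis asks that $\Q(\E[2]) \ne \Q(\E[1]) = \Q$, which is exactly our assumption.

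Theorem~\ref{T4b} therefore gives $\Cc_{\E,1} > 0$. There is no real obstacle; the only point to confirm is the convention $\Gamma_1 = GL_2(\Zhat)$. This is harmless in the proof of Theorem~\ref{T4b}: there $H\cap\Gamma_1 = H$, and Lemma~\ref{lem:cyclotomic-lemma} with $j = 1$ reduces to the surjectivity of $\Dc_k$, i.e.\ of the cyclotomic character. That surjectivity holds because $\Q(\zeta_k)$ has full degree $\varphi(k)$ over $\Q$.
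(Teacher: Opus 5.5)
Your proposal is correct and matches the paper's proof, which likewise takes $j = 1$ in Theorem~\ref{T4b}. There the first hypothesis is trivial because $\Q(\E[1]) = \Q = \Q(\zeta_1)$, and the second hypothesis is exactly the assumption $\Q(\E[2]) \ne \Q$.
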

\begin{proof} Immediate from Theorem~\ref{T4b}, taking $j = 1$.
\end{proof}
\begin{cor}\label{cor:j-abelianisation} If $j$ is odd, $\Q(\E[2j]) \ne \Q(\E[j])$, and the image $\bar{H} = H \bmod j$ of $\rho_j : \GalQ \to GL_2(\Z/j\Z)$ satisfies 
\[[\bar{H}, \bar{H}] = \bar{H} \cap SL_2(\Z/j\Z),\]
then $\Cc_{\E,j} > 0$.
\end{cor}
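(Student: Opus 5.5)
The plan is to deduce this from Theorem~\ref{T4b}. The hypothesis $\Q(\E[2j]) \ne \Q(\E[j])$ is already one of its two conditions. So the real content is to show that the commutator condition forces
\[\Q(\E[j]) \cap \Q(\zeta_R) = \Q(\zeta_j),\]
where $R$ is as in Theorem~\ref{T4b}.

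\textbf{The field-theoretic step.} Let $F = \Q(\E[j]) \cap \Q(\zeta_R)$. Since $j \mid R$, we have $\Q(\zeta_j) \subset F$. For the reverse inclusion, note that $F$ is a subfield of $\Q(\zeta_R)$, so $F/\Q$ is abelian. Identify $\Gal(\Q(\E[j])/\Q)$ with $\bar{H}$ via $\rho_j$. Then $\Gal(\Q(\E[j])/F)$ is a normal subgroup of $\bar{H}$ with abelian quotient, and hence contains $[\bar{H},\bar{H}]$.

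Because $\det \circ \rho_j$ is the mod $j$ cyclotomic character, we have
\[\bar{H} \cap SL_2(\Z/j\Z) = \Gal(\Q(\E[j])/\Q(\zeta_j)).\]
By hypothesis this group equals $[\bar{H},\bar{H}]$. Therefore $\Gal(\Q(\E[j])/\Q(\zeta_j)) \subset \Gal(\Q(\E[j])/F)$, that is, $F \subset \Q(\zeta_j)$. This gives the required equality. Notably, this argument does not use the particular shape of $R$.

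\textbf{Reducing to the setting of Theorem~\ref{T4b}.} Theorem~\ref{T4b} is stated for $j \mid m_\E$, while here $j$ is arbitrary (odd). There are two options.
\begin{enumerate}
\item Observe that the proof of Theorem~\ref{T4b}, together with Lemma~\ref{lem:cyclotomic-lemma} and Proposition~\ref{prop:Cejiff}, goes through verbatim with $m_\E$ replaced by any multiple $m$ of the adelic level. Everything there only uses that $H$ is the full preimage of its reduction mod $m$. We would then apply it with $m = \mathrm{lcm}(j, m_\E)$, where $R$ is $j$ times the odd primes dividing $m/j$.
\item Alternatively, pass to $j' = \gcd(j, m_\E)$ using Proposition~\ref{prop:cej-factor}. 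This requires checking that both hypotheses descend to $j'$, which is less clean: lifting elements of $(H \bmod j') \cap SL_2$ to $\bar{H} \cap SL_2$ is not obviously possible.
\end{enumerate}
I would therefore take the first route.

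\textbf{Expected obstacle.} The main point needing care is this bookkeeping about replacing $m_\E$ by a multiple $m$. The group-theoretic deduction above is short. Once the reduction is justified, Theorem~\ref{T4b} immediately gives $\Cc_{\E,j} > 0$.
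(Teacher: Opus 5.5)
Your proposal is correct and is the paper's argument. The paper's proof consists only of the observation that the maximal abelian subextension of $\Q(\E[j])$ is $\Q(\E[j])^{[\bar{H},\bar{H}]} = \Q(\E[j])^{\bar{H}\cap SL_2(\Z/j\Z)} = \Q(\zeta_j)$, followed by an appeal to Theorem~\ref{T4b}. Your extra point is that Theorem~\ref{T4b} is stated only for $j \mid m_\E$, and that its proof (together with Proposition~\ref{prop:Cejiff} and Lemma~\ref{lem:cyclotomic-lemma}) goes through verbatim with $m = \lcm(j, m_\E)$ in place of $m_\E$; this correctly fills a step the paper leaves implicit. Your second route would also have worked: the adelic image in $GL_2(\Zhat)$ contains $\Gamma_{m_\E} \subset \Gamma_{j'}$, whose determinants reduce mod $j$ to all of $1 + j'\Z/j\Z$, and this gives the lifting you were unsure about.
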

\begin{proof}
    The maximal abelian subextension of $\Q(\E[j])$ is equal to 
    \[\Q(\E[j])^{[\bar{H},\bar{H}]} = \Q(\E[j])^{\bar{H} \cap SL_2(\Z/j\Z)} = \Q(\zeta_j).\]
    In particular, Theorem~\ref{T4b} applies.
\end{proof}
\begin{remark}
    If $j$ is odd, $4 \nmid m_\E$, and $9 \nmid m_\E/j$, then one can show that the conditions in Corollary~\ref{cor:j-abelianisation} are equivalent to those in Theorem~\ref{T4b}.
\end{remark}


\begin{example} Let $\E : y^2+y=x^3+x^2+x$ be the elliptic curve with LMFDB label \href{https://www.lmfdb.org/EllipticCurve/Q/19/a/3}{19.a3}. Then $\rho_3$ has image 
\[\left\{\begin{pmatrix}1 & \star  \\ 0  & \star \end{pmatrix}\right\}\]
and $[\Q(\E[6]) : \Q(\E[3])] = 6$. Thus the hypotheses of Corollary~\ref{cor:j-abelianisation} apply to show that $\Cc_{\E, 3} > 0$. There are infinitely many similar examples, see~\cite{zywina_possible_2015}.
\end{example}
We now obtain a simple generalisation of the main theorem of~\cite{Kim}, removing the condition that $\gcd(j, N_\E) = 1$.
\begin{cor} \label{cor:j-coprime-6AE} Suppose that $\gcd(j, 2A(\E)) = 1$ and that $\Q(\E[2j]) \ne \Q(\E[j])$. Then $\Cc_{\E,j} > 0$.
\end{cor}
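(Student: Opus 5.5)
Reduce to the case $j \mid m_\E$ and then check the cyclotomic hypothesis of Theorem~\ref{T4b}, which should follow from surjectivity of $\rho_j$.

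\emph{Reduction.} By Proposition~\ref{prop:cej-factor}, $\Cc_{\E,j} > 0$ if and only if $\Cc_{\E,j'} > 0$, where $j' = \gcd(j, m_\E)$. The hypotheses descend to $j'$:
\begin{itemize}
\item Since $j' \mid j$, clearly $\gcd(j', 2A(\E)) = 1$.
\item We need $\Q(\E[2j']) \ne \Q(\E[j'])$. Let $j = j' t$. Every prime $p \mid t$ satisfies $v_p(j) > v_p(m_\E)$, and $p$ is odd since $j$ is odd. Suppose $\Q(\E[2j']) = \Q(\E[j'])$, i.e.\ $|H \bmod 2j'| = |H \bmod j'|$. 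Repeated application of Lemma~\ref{lem:FK}, adding primes of $t$ one at a time, multiplies $|H \bmod 2j'|$ and $|H\bmod j'|$ by the same factor. This gives $|H \bmod 2j| = |H \bmod j|$, contradicting the hypothesis.
\end{itemize}
So we may assume $j \mid m_\E$.

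\emph{Cyclotomic condition.} With $j \mid m_\E$ and $R$ as in Theorem~\ref{T4b}, we must show $\Q(\E[j]) \cap \Q(\zeta_R) = \Q(\zeta_j)$. By Proposition~\ref{prop:Avsm}(3), $\rho_j$ is surjective, so $\Gal(\Q(\E[j])/\Q) \cong GL_2(\Z/j\Z)$. Since $\Q(\zeta_R)$ is abelian, the intersection lies in the maximal abelian subextension of $\Q(\E[j])$. That subextension is cut out by $[GL_2(\Z/j\Z), GL_2(\Z/j\Z)]$.

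Because $j$ is odd, I would invoke \cite[Lemma~7.7]{zywinaExplicitOpenImages2024}, as was done in the proof of Proposition~\ref{prop:Avsm}. It gives that any subgroup with abelian quotient contains $SL_2(\Z/j\Z)$. Hence the commutator subgroup equals $SL_2(\Z/j\Z)$, and the maximal abelian subextension is $\Q(\E[j])^{SL_2} = \Q(\zeta_j)$. This is exactly the argument of Corollary~\ref{cor:j-abelianisation} with $\bar H = GL_2(\Z/j\Z)$, so one can alternatively just cite that corollary after the reduction. Therefore the intersection with $\Q(\zeta_R)$ is $\Q(\zeta_j)$, noting that $\Q(\zeta_j) \subset \Q(\zeta_R)$ since $j \mid R$.

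\emph{Conclusion and main obstacle.} Theorem~\ref{T4b} (or Corollary~\ref{cor:j-abelianisation}) now gives $\Cc_{\E,j} > 0$. The only delicate step is the reduction: we must check that the non-coincidence hypothesis $\Q(\E[2j])\neq\Q(\E[j])$ passes to $\gcd(j,m_\E)$. This is where Lemma~\ref{lem:FK} must be applied carefully, since it needs $v_p \ge v_p(m_\E)$ at each prime being added.
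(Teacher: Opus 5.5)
Your argument is correct and is essentially the paper's: surjectivity of $\rho_j$ from Proposition~\ref{prop:Avsm}(3), the commutator subgroup of $GL_2(\Z/j\Z)$ being $SL_2(\Z/j\Z)$ for odd $j$, and then Corollary~\ref{cor:j-abelianisation} (i.e.\ Theorem~\ref{T4b}). Your explicit reduction to $j \mid m_\E$ is a welcome addition, using Lemma~\ref{lem:FK} to check that $\Q(\E[2j]) \ne \Q(\E[j])$ descends to $\gcd(j, m_\E)$; Theorem~\ref{T4b} is stated only for $j \mid m_\E$, and the paper leaves this step implicit.
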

\begin{proof}
    This follows from Corollary~\ref{cor:j-abelianisation} and Proposition~\ref{prop:Avsm}, as the derived subgroup of $GL_2(\Z/p^a\Z)$ is $SL_2(\Z/p^a\Z)$ for $p \ge 3$ by (for example)~\cite[Lemma~7.7]{zywinaExplicitOpenImages2024}.
\end{proof}

In Theorem~\ref{T4b}, the condition $\Q(\E[2j]) \ne \Q(\E[j])$ is necessary, by Corollary~\ref{cor:revdir}. However, the other condition is certainly not necessary, as the following example shows.
\begin{example}
    Let $\E : y^2+xy+y=x^3+x^2-3x+1$ be the elliptic curve with LMFDB label \href{https://www.lmfdb.org/EllipticCurve/Q/50/b/3}{50.b3}. Then the image of $\rho_3$ is the full group of upper triangular matrices, $R = 15$, and $\Q(\E[3]) \cap \Q(\zeta_R)$ is the quadratic extension of $\Q(\zeta_3)$ in $\Q(\zeta_{15})$. However, from the information in~\cite{lmfdb}, we have 
    \[\frac{1}{|H\bmod 3|} - \frac{1}{|H\bmod 6|} - \frac{1}{|H\bmod 15|} + \frac{1}{|H\bmod 30|} > \frac{1}{12} - \frac{1}{72} - \frac{1}{120} > 0\]
    and so $\Cc_{\E,3} > 0$ by Proposition~\ref{prop:cej-factor}.
\end{example}
On the other hand, here is an example where the even $j$ are the crucial ones to consider:
\begin{example} Let $\E: y^2=x^3-3x+4$ with LMFDB label \href{https://www.lmfdb.org/EllipticCurve/Q/5184/j/1}{5184.j1}. Then $m_\E = 4$, so the first condition in Theorem~\ref{T4b} is vacuous. As $\Q(\E[4]) \neq \Q(\E[2]) \neq \Q$, we see that $\Cc_{\E,j}>0$ for all $j \mid m_\E$ and hence for all positive integers $j$.
\end{example}

Jones~\cite{jones_almost_2010} shows that 100\% of elliptic curves (ordered by naive height) are ``Serre curves'', meaning that the index of $\im(\rho)$ in $GL_2(\hat{\Z})$ is two (as small as possible). As we completely understand the adelic image in this case, the analysis is straightforward.
\begin{prop}\label{prop:serre} Let $\E$ be a Serre curve. Then $\Cc_{\E,j} > 0$ for all $j$.
\end{prop}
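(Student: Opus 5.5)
The plan is to verify the criterion of Proposition~\ref{prop:Cejiff} directly, by writing down an explicit element, rather than going through Theorem~\ref{T4b}. The reason is that for even $j$ the field $\Q(\E[j])\supset\Q(\E[2])$ contains $\Q(\sqrt{\Delta})$, which need not lie in $\Q(\zeta_j)$, so the cyclotomic hypothesis of Theorem~\ref{T4b} can fail. By Proposition~\ref{prop:cej-factor} it suffices to treat $j\mid m$, where $m=m_\E$ (or any multiple of it).

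First I recall the standard description of the image for a Serre curve. Let $D$ be the discriminant of $\Q(\sqrt{\Delta_\E})$ and let $\chi_D$ be the associated Kronecker character of $\Zhat^\times$. Let $\varepsilon:GL_2(\Zhat)\to GL_2(\F_2)\cong S_3\to\{\pm1\}$ be the sign character. Then $H=\{g\in GL_2(\Zhat):\varepsilon(g)=\chi_D(\det g)\}$, because this subgroup has index $2$ and contains the image. Consequently $m_\E$ is divisible only by $2$ and primes dividing $D$; I only need that $H$ has this form and is open.

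Next, by Proposition~\ref{prop:Cejiff} (in the adelic formulation, with $S=\{p: p\mid m/j\}$), it is enough to exhibit $g\in H\cap\Gamma_j$ with $g\notin\Gamma_{jp}$ for every $p\in S$. I construct $g=(g_p)_p\in\prod_p GL_2(\Z_p)$ componentwise.
\begin{itemize}
\item For $p\notin S$, set $g_p=I$.
\item For $p\in S$ with $p\mid j$, or with $p$ odd, set $g_p=I+p^{v_p(j)}E_{12}$, where $E_{12}$ is the elementary matrix. This is unipotent with determinant $1$, it is congruent to $I$ mod $p^{v_p(j)}$, and it is not congruent to $I$ mod $p^{v_p(j)+1}$.
\item If $2\in S$ and $2\nmid j$, set $g_2=\twomat{0}{-1}{1}{-1}$. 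It has determinant $1$ and reduces to an element of order $3$ in $GL_2(\F_2)$. So it is nontrivial mod $2$ and has $\varepsilon(g_2)=+1$.
\end{itemize}

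Now I check that $g\in H$. We have $\det g=1$, so $\chi_D(\det g)=1$. If $j$ is even, then $g\equiv I \pmod 2$ and $\varepsilon(g)=1$. If $j$ is odd, then $\varepsilon(g)=\varepsilon(g_2)=1$ by the choice of $g_2$ (or $g_2=I$ if $2\notin S$). Hence $g\in H$.

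By construction $g\equiv I \pmod{p^{v_p(j)}}$ for all $p$, so $g\in\Gamma_j$. For each $p\in S$, the component $g_p$ is nontrivial mod $p^{v_p(j)+1}$, so $g\notin\Gamma_{jp}$. Reducing mod $m$, the element $g$ lies in $H\cap\Gamma_j \bmod m$ but in none of the sets $H\cap\Gamma_{jp}\bmod m$ for $p\in S$. Proposition~\ref{prop:Cejiff} then gives $\Cc_{\E,j}>0$.

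The only delicate point, which I expect to be the main obstacle, is the prime $2$ when $j$ is odd. The obvious unipotent choice $I+E_{12}$ reduces to a transposition in $S_3$, so $\varepsilon=-1$ while $\chi_D(\det)=1$, and it would not lie in $H$. The order-$3$ element of determinant $1$ avoids this problem. I should also double-check that the description of $H$ for a Serre curve is exactly as stated, so that no further congruence condition is hidden in $H$.
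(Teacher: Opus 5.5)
Your proposal is correct and follows essentially the same route as the paper: reduce to $j \mid m_\E$, use the description of $H$ as the kernel of $\varepsilon\cdot(\chi\circ\det)$, and apply Proposition~\ref{prop:Cejiff} to an explicit determinant-one element built componentwise, taking an order-$3$ element of $SL_2$ at $2$ when $j$ is odd so that $\varepsilon = 1$. The only differences are cosmetic: you work adelically, and you also treat odd $p \in S$ with $p \mid j$, a case the paper never meets because the odd part of $m_\E$ is squarefree for a Serre curve.
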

\begin{proof}
    We may assume (Proposition~\ref{prop:cej-factor}) that $j \mid m_\E$ and write $H = \im(\rho_{m_\E})$.
    Let $\Delta_\E$ be the discriminant of $\E$ and let $\chi$ be the quadratic (or trivial, if $\Delta_\E$ is square) Dirichlet character of conductor $m$ associated to the extension $\Q(\sqrt{\Delta_\E})/\Q$, regarded as a character of $(\Z/m\Z)^\times$.  Then by~\cite[section 3]{jones_averages_2009} we have that $m_\E = \lcm(2,m)$ and
    \[H = \ker(\epsilon\cdot(\chi\circ \det)),\]
    where $\epsilon$ is the nontrivial quadratic character of $GL_2(\F_2)$, inflated to $GL_2(\Z/m_\E\Z)$.

	    Let $a = v_2(j)$, $b = v_2(m)$, $c = v_2(m_\E) = \max(1,b)$ and note that $m/2^b$ is odd and squarefree. We may write $\chi = \prod_{p \mid m} \chi_p$ where each $\chi_p$ is a nontrivial character of $(\Z/p\Z)^\times$ (for $p$ odd) or of $(\Z/2^{b}\Z)^\times$ (if $p = 2$). 

    If $S$ is the set of primes dividing $m_\E/j$ then, by Proposition~\ref{prop:Cejiff}, we need to show that $H \cap \Gamma_j$ contains an element whose image in each $GL_2(\Z/pj\Z)$ is nontrivial. We can write 
    \[\Gamma_j = K_{a}(2) \times \prod_{p \in S, p\ne 2} GL_2(\F_p).\]
     For $p$ odd, let $g_p \in SL_2(\F_p)$ with $g_p \ne e$. If $a = 0$, let $g_2 \in SL_2(\Z/2^{c}\Z)$ satisfy $\epsilon(g_2) = 1$ and $g_2 \bmod 2 \ne I$. If $a > 0$, let $g_2 \in K_a(2) \cap SL_2(\Z/2^c\Z)$ with $g_2 \not \in K_{a+1}(2)$. 
     
     In each case, we have $\chi_p(\det(g_p)) = 1$, while in the case $p = 2$ we also have $\epsilon(g_2) = 1$. The element $(g_{p})_{p \in S}$ is then an element of $H \cap \Gamma_j$ and not an element of any $\Gamma_{pj}$ for $p \in S$, as required.
\end{proof}

\begin{remark}
    One can show that, in the case of Serre curves, Theorem~\ref{T4b} applies unless $m_\E/j$ is odd (and > 1). Otherwise it does not apply, but the conclusion $\Cc_{\E,j} > 0$ still holds.
\end{remark}

\section{Coincidences}
\label{sec:coincidences}
We now give some examples where $\Cc_{\E,j} = 0$. Conjectures~\ref{conj:one-prime} and~\ref{conj:coinc23} predict that, in
any such case with $\E$ non-CM, $\Q(\E[j]) = \Q(\E[jp])$
for $p = 2$ or $3$ (which certainly implies that $\Cc_{\E,j} = 0$ by Corollary~\ref{cor:revdir}). These are examples of
`coincidences' of division fields in the sense of~\cite{DLZ}. For $p$ a prime, we call a \emph{$p$-coincidence} one of
the form $\Q(\E[j]) = \Q(\E[jp])$.

Given a $p$-coincidence $\Q(\E[d]) = \Q(\E[dp])$, we may construct further coincidences $\Q(\E[dk]) = \Q(\E[dkp])$ for all
$k > 1$ coprime to $p$. Call these coincidences \emph{imprimitive}, so that coincidences not arising in this way are
\emph{primitive}. There are simple constructions of elliptic curves with primitive coincidences
$\Q(\E[1]) = \Q(\E[2])$, $\Q(\E[2]) = \Q(\E[4])$ and $\Q(\E[2]) = \Q(\E[6])$, giving the following proposition.

\begin{prop}
    Given a positive integer $j$ with $12 \nmid j$, there exists an elliptic curve $\E$ with $\Cc_{\E,j}=0$.
\end{prop}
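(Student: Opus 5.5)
The plan is to reduce everything to the three primitive coincidences $\Q(\E[1]) = \Q(\E[2])$, $\Q(\E[2]) = \Q(\E[4])$ and $\Q(\E[2]) = \Q(\E[6])$. Each of these propagates to imprimitive coincidences, and Corollary~\ref{cor:revdir} then turns any coincidence $\Q(\E[j]) = \Q(\E[jp])$ into $\Cc_{\E,j} = 0$. Since $12 \nmid j$, either $4 \nmid j$ or $3 \nmid j$. I would therefore split into three cases according to $v_2(j)$ and divisibility by $3$.

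\textbf{Case $j$ odd.} Take $\E$ with full rational $2$-torsion, for instance $y^2 = x(x-1)(x+2)$, which is non-CM with $j$-invariant not $0$ or $1728$. Then $\Q(\E[2]) = \Q$. Since $j$ is odd,
\[\Q(\E[2j]) = \Q(\E[2])\Q(\E[j]) = \Q(\E[j]),\]
because $\E[2j] = \E[2]\oplus\E[j]$ and the compositum of the division fields is the division field of the sum. This gives a $2$-coincidence at $j$.

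\textbf{Case $v_2(j) = 1$.} Write $j = 2k$ with $k$ odd. Take $\E$ with $\Q(\E[2]) = \Q(\E[4])$. Since $\gcd(k,2)=1$,
\[\Q(\E[4k]) = \Q(\E[4])\Q(\E[k]) = \Q(\E[2])\Q(\E[k]) = \Q(\E[2k]).\]
This case covers $j \equiv 2 \pmod 4$, including the multiples of $3$.

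\textbf{Case $4 \mid j$, $3 \nmid j$.} Take $\E$ with $\Q(\E[2]) = \Q(\E[6])$, so that $\Q(\E[3]) \subset \Q(\E[2])$. For $j$ divisible by $2$ and not by $3$, we have $\Q(\E[2]) \subset \Q(\E[j])$, hence
\[\Q(\E[3j]) = \Q(\E[j])\Q(\E[3]) = \Q(\E[j]).\]
This is a $3$-coincidence. The same curve also handles every even $j$ prime to $3$, but the previous case is still needed for $j \equiv 6 \pmod{12}$.

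The main obstacle is exhibiting explicit non-CM curves with $\Q(\E[2]) = \Q(\E[4])$ and with $\Q(\E[2]) = \Q(\E[6])$; the first coincidence is trivial. Here I would cite the constructions referred to in the text and in~\cite{DLZ}, or give explicit LMFDB examples and check them directly. For $\Q(\E[2]) = \Q(\E[4])$, one first forces $\Q(i) \subset \Q(\E[2])$ and then arranges the $4$-torsion to be defined over that field. For $2$–$3$, degree considerations force $\Q(\E[3])$ to have degree dividing $6$ and to contain $\Q(\zeta_3)$. The mod $3$ image must therefore be small, for example of order $2$ or $6$, and lie inside an $S_3$ cubic field containing $\sqrt{-3}$ which equals $\Q(\E[2])$. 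I would look for such a curve in the LMFDB by searching for matching images at $2$ and $3$ with adelic level $6$ and index consistent with the coincidence.
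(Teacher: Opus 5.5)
Your proof is correct and follows the paper's argument. You use the same case split on $v_2(j)$ and on $3 \mid j$: a $2$-coincidence from a curve with $\Q(\E[2])=\Q$ for odd $j$, a $2$-coincidence from $\Q(\E[2])=\Q(\E[4])$ when $j \equiv 2 \pmod 4$, and a $3$-coincidence from $\Q(\E[3])\subset\Q(\E[2])$ when $4\mid j$, $3\nmid j$, each concluded by Corollary~\ref{cor:revdir}. The paper likewise just cites \cite{DLZ} for the two nontrivial curves, namely $y^2=x^3+13x-34$ and $y^2=x^3+405x-9882$ (the latter has $\Q(\E[2])=\Q(\E[3])$, which gives the inclusion you need), so your deferral to that source leaves no gap relative to the paper's proof.
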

\begin{proof}
    If $j$ is odd then we may simply take a curve with $\Q(\E[2])=\Q$, such as $\E: y^2=x^3-7x+6$. Then $\Cc_{\E,j}=0$ by Corollary~\ref{cor:revdir} with $p=2$. 

    If $j$ is even and not divisible by 4 then take a curve with $\Q(\E[2]) = \Q(\E[4])$, such as $y^2 = x^3 + 13x - 34$ (see~\cite[Example~1.3]{DLZ}) and note that $\Q(\E[j]) = \Q(\E[2j])$ and so $\Cc_{\E,j} = 0$ by Corollary~\ref{cor:revdir} with $p=2$.
    
		    If $j$ is divisible by 4 then, by assumption, $3 \nmid j$. Let $\E$ be an elliptic curve with $\Q(\E[2])=\Q(\E[3])$, such as $y^2 = x^3 + 405x - 9882$ (see~\cite[Example~1.2]{DLZ}). 
	    Then $\Q(\E[3j])=\Q(\E[j])$ and by Corollary~\ref{cor:revdir} with $p=3$, we have $\Cc_{\E,j}=0$.
\end{proof}

In Remark~1.9 of~\cite{DLZ}, they speculate that the only coincidences $\Q(\E[m]) = \Q(\E[n])$
arise for $(n, m) \in \{(2, 3), (2, 4), (2, 6), (3, 6)\}$. This is not true, even if appropriately modified to take into
account the fact that one may replace $m$ and $n$ in a coincidence by their least common multiples with a third integer
$k$; we will see a number of examples below. However, we do make the following conjecture:

\begin{conj}\label{conj:coinc23} Suppose that $p \ge 5$. Then there are no non-CM elliptic curves over $\Q$ with a $p$-coincidence.
\end{conj}

\begin{rem}\label{rem:yvon} By~\cite[Corollary~3.10]{yvonCoincidencesDivisionFields2025}, if $\E$ is a non-CM elliptic curve over $\Q$
  with a $p$-coincidence then $p \mid 2 \Delta_\E$. Moreover, a standard argument shows that if there is a
  $p$-coincidence $\Q(\E[j]) = \Q(\E[jp])$ with $p > 5$ and $p \nmid j$, then $p \mid A(\E)$.
\end{rem}

Thanks to Zywina's algorithm~\cite{zywinaExplicitOpenImages2024}, the LMFDB now contains data on the images of mod $m$
Galois representations for elliptic curves over $\Q$. It is then straightforward to check\footnote{Code available at
  \url{https://github.com/jackshotton/invariant-factors-public}. Note that we only need to check non-CM elliptic curves
  with adelic index > 2, by Proposition~\ref{prop:serre}.} the following:

\begin{prop} Conjectures~\ref{conj:one-prime} and~\ref{conj:coinc23} are true for all curves in the
  LMFDB\footnote{As of 19th April 2026.}, and in particular for all curves of conductor $\le 500000$.
\end{prop}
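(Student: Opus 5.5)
The statement is a computational verification, so the plan is to reduce both conjectures, for each curve, to a finite group-theoretic check on the mod $m_\E$ image recorded in the LMFDB, and then run that check over the database.

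\emph{Conjecture~\ref{conj:one-prime}.} Fix a non-CM curve $\E$ in the LMFDB with adelic index greater than $2$; Serre curves are handled by Proposition~\ref{prop:serre}. From Zywina's algorithm~\cite{zywinaExplicitOpenImages2024} we obtain $m_\E$ and generators of $H=\im(\rho_{m_\E})\subset GL_2(\Z/m_\E\Z)$. By Proposition~\ref{prop:cej-factor}, $\Cc_{\E,j}>0$ if and only if $\Cc_{\E,\gcd(j,m_\E)}>0$, so it suffices to treat the finitely many divisors $j\mid m_\E$. For each such $j$ we compute $H\cap\Gamma_j$ and, for each prime $p\mid m_\E/j$, the subgroup $H\cap\Gamma_{jp}$. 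Proposition~\ref{prop:Cejiff} says $\Cc_{\E,j}=0$ exactly when the subgroups $H\cap\Gamma_{jp}$ together cover $H\cap\Gamma_j$. Whenever this covering occurs, we then check that one of these subgroups is all of $H\cap\Gamma_j$. That is precisely the statement $\Q(\E[j])=\Q(\E[jp])$, since $H\cap\Gamma_j\bmod m_\E=\Gal(\Q(\E[m_\E])/\Q(\E[j]))$ and similarly for $jp$.

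We also need this for general $j$, not only divisors of $m_\E$. Suppose $\Cc_{\E,j}=0$. Then $\Cc_{\E,\gcd(j,m_\E)}=0$, so there is a coincidence $\Q(\E[d])=\Q(\E[dp])$ with $d=\gcd(j,m_\E)$. Here $p\mid m_\E/d$, which forces $v_p(j)=v_p(d)$. Taking the compositum with $\Q(\E[j])$ then gives $\Q(\E[j])=\Q(\E[jp])$.

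\emph{Conjecture~\ref{conj:coinc23}.} A $p$-coincidence $\Q(\E[j])=\Q(\E[jp])$ forces $|H\bmod jp|=|H\bmod j|$. This is impossible if $v_p(j)\ge v_p(m_\E)$, by Lemma~\ref{lem:FK}. Likewise, if $v_p(j)<v_p(m_\E)$ we may replace $j$ by $\gcd(j,m_\E)$ times the appropriate power of $p$, using Lemma~\ref{lem:FK} at the other primes. So it suffices to check, for each prime $p\ge5$ dividing $m_\E$ and each $j\mid m_\E$ with $v_p(j)<v_p(m_\E)$, that $H\cap\Gamma_{jp}\ne H\cap\Gamma_j$. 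The same subgroup computations do this. Curves with $p\nmid m_\E$ for all $p\ge5$ need no check; Remark~\ref{rem:yvon} could be used to prune further.

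The main obstacle is practical rather than conceptual. First, the LMFDB must really supply the full adelic image $H\bmod m_\E$, not just the $\ell$-adic images. Second, $|H|$ can be large when $m_\E$ has many prime factors. To keep the computation manageable, we would work with the subgroups $H\cap\Gamma_j$ directly, via kernels of reduction maps, and compare orders $|H\bmod jk|$ rather than enumerating elements. The one place we must enumerate is the covering test in Proposition~\ref{prop:Cejiff}. There, the inclusion–exclusion count $\sum_{k}\mu(k)|H\cap\Gamma_{jk}|$ from the proof of that proposition reduces the test to computing orders alone.
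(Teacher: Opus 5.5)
Your proposal is correct and follows the paper's route: a finite machine check on the LMFDB adelic-image data produced by Zywina's algorithm, restricted to non-CM curves of adelic index $>2$ via Proposition~\ref{prop:serre}. Your reductions (to divisors $j \mid m_\E$ via Proposition~\ref{prop:cej-factor} and Lemma~\ref{lem:FK}, and to order computations via the inclusion--exclusion behind Proposition~\ref{prop:Cejiff}) just make explicit what the paper compresses into ``straightforward to check'' and its code; the phrase ``times the appropriate power of $p$'' is superfluous, since $v_p(\gcd(j,m_\E)) = v_p(j)$ already when $v_p(j)<v_p(m_\E)$.
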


\subsection{Constructions from abelian division fields.}
\label{sec:abelian}

We can construct some infinite families of primitive $p$-coincidences starting with certain abelian division fields. These
were classified in~\cite{gonzalez-jimenezEllipticCurvesAbelian2016}. The simplest is when we have an abelian
two-division field.

\begin{prop}\label{prop:ab-two-torsion}
  Suppose that $\Q(\E[2])$ is an abelian extension of $\Q$ with odd conductor $m$. That is, $\Q(\E[2])$ is either
  trivial, cubic, or a quadratic extension $\Q(\sqrt{m'})$ with $m' = 3 \bmod 4$ and $m = |m'|$.
    
    Then $\Q(\E[m]) = \Q(\E[2m])$.
\end{prop}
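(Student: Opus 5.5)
The plan is to show $\Q(\E[2m]) \subseteq \Q(\E[m])$ directly. The reverse inclusion is automatic, since $m \mid 2m$.

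First, because $m$ is odd, $\gcd(2,m) = 1$ and $\E[2m] = \E[2] \oplus \E[m]$. Hence $\Q(\E[2m])$ is the compositum $\Q(\E[2])\,\Q(\E[m])$. It therefore suffices to prove the single inclusion $\Q(\E[2]) \subseteq \Q(\E[m])$.

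Second, I would use the hypothesis that $\Q(\E[2])$ is abelian of conductor $m$. By Kronecker--Weber and the definition of the conductor, an abelian extension of $\Q$ of conductor $m$ is contained in $\Q(\zeta_m)$. So $\Q(\E[2]) \subseteq \Q(\zeta_m)$.

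Third, the Weil pairing $e_m : \E[m] \times \E[m] \to \mu_m$ is surjective and Galois-equivariant, so $\Q(\zeta_m) \subseteq \Q(\E[m])$. Equivalently, in the notation of Section~\ref{sec:positivity}, $\det \circ \rho_m$ is the mod $m$ cyclotomic character. Chaining these gives
\[\Q(\E[2]) \subseteq \Q(\zeta_m) \subseteq \Q(\E[m]),\]
and hence $\Q(\E[2m]) = \Q(\E[2])\,\Q(\E[m]) = \Q(\E[m])$. The degenerate case $\Q(\E[2]) = \Q$ has conductor $m = 1$ and gives $\Q(\E[2]) = \Q(\E[1])$ directly.

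I do not expect a real obstacle, since the argument is just three inclusions. The only point needing care is the ``that is'' description of the possible fields. Since $\Gal(\Q(\E[2])/\Q)$ embeds in $GL_2(\F_2) \cong S_3$, an abelian $2$-division field is trivial, quadratic, or cyclic cubic. I would still verify the odd-conductor condition separately in each case: for a quadratic field this is a congruence condition on the squarefree discriminant, and for a cyclic cubic field the conductor is automatically odd. However, the main argument only uses that $\Q(\E[2])$ is abelian of odd conductor $m$, so the proof does not depend on this classification.
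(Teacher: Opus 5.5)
Your proposal is correct and is essentially the paper's proof, which is exactly the chain $\Q(\E[2]) \subset \Q(\zeta_m) \subset \Q(\E[m])$ with $m$ odd; you have spelled out the compositum, Kronecker--Weber and Weil pairing steps that the paper leaves implicit. On the side check you mention, for squarefree $m'$ the field $\Q(\sqrt{m'})$ has odd conductor exactly when $m' \equiv 1 \bmod 4$ (for example $\Q(\sqrt{-3})$ or $\Q(\sqrt{33})$), so the ``$3 \bmod 4$'' in the statement's parenthetical description should read $1 \bmod 4$; as you note, this does not affect the argument.
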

\begin{proof}
    By assumption, $\Q(\E[2]) \subset \Q(\zeta_m)\subset \Q(\E[m])$ with $m$ odd. Therefore $\Q(\E[m])=\Q(\E[2m])$.
\end{proof}

\subsubsection{Abelian $2$-power torsion}
\label{sec:abelian-2}
More families occur when we have abelian division fields $\Q(\E[2^r])$ such that $\Q(\E[2^{r-1}])$ contains
$\zeta_{2^r}$. By~\cite{gonzalez-jimenezEllipticCurvesAbelian2016}, we must have $r \le 3$ and $\Gal(\Q(\E[2^r])/\Q)$ is
an elementary abelian $2$-group.

\begin{prop}\label{prop:ab-four-torsion}
  \leavevmode
  \begin{enumerate}
  \item Suppose that $\E$ is an elliptic curve such that $\Gal(\Q(\E[4])/\Q)$ is abelian and that $\Q(\E[2]) = \Q(i)$. Suppose also
    that the conductor of $\Q(\E[4])/\Q$ is $4j$ with $j$ odd.

    Then $\Q(\E[2j]) = \Q(\E[4j])$.
  \item Suppose that $\E$ is an elliptic curve such that $\Gal(\Q(\E[8])/\Q)$ is abelian and that $\zeta_8 \in \Q(\E[4])$. Suppose 
    that the conductor of $\Q(\E[8])/\Q$ is $8j$ (and note that $j$ is necessarily odd as $\Gal(\Q(\E[8])/\Q) \cong
    C_2^k$ for some $k$).

    Then $\Q(\E[4j]) = \Q(\E[8j])$.
  \end{enumerate}
\end{prop}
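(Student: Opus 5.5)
Both parts go the same way as Proposition~\ref{prop:ab-two-torsion}: the larger $2$-power division field turns out to be a subfield of the smaller division field, because it is abelian with a conductor that the smaller field already contains.

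For (1), the field $\Q(\E[4])$ is abelian of conductor $4j$, so Kronecker--Weber gives
\[\Q(\E[4]) \subset \Q(\zeta_{4j}) = \Q(\zeta_4)\Q(\zeta_j).\]
By hypothesis $\Q(\zeta_4) = \Q(i) = \Q(\E[2]) \subset \Q(\E[2j])$. The Weil pairing gives $\Q(\zeta_j) \subset \Q(\E[j]) \subset \Q(\E[2j])$. Hence $\Q(\E[4]) \subset \Q(\E[2j])$. Since $j$ is odd, $\E[4j] = \E[4] \oplus \E[j]$, so
\[\Q(\E[4j]) = \Q(\E[4])\Q(\E[j]) \subset \Q(\E[2j]).\]
The reverse inclusion is trivial, so the two fields are equal.

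For (2), first check the parenthetical claim that $j$ is odd. If $\Gal(\Q(\E[8])/\Q)$ is abelian, it is a subgroup of $GL_2(\Z/8\Z)$ and is an elementary abelian $2$-group by~\cite{gonzalez-jimenezEllipticCurvesAbelian2016}, as recalled above. Its conductor is then $2^a j$ with $j$ odd and squarefree and $a \le 3$. The field contains $\zeta_8$ by the Weil pairing, so the $2$-part of the conductor is exactly $8$. Now argue as in (1):
\[\Q(\E[8]) \subset \Q(\zeta_{8j}) = \Q(\zeta_8)\Q(\zeta_j).\]
Here $\zeta_8 \in \Q(\E[4]) \subset \Q(\E[4j])$ by hypothesis, and $\zeta_j \in \Q(\E[j]) \subset \Q(\E[4j])$. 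Hence $\Q(\E[8j]) = \Q(\E[8])\Q(\E[j]) \subset \Q(\E[4j])$, and the reverse inclusion is again trivial.

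There is no serious obstacle. The only points needing care are the decomposition $\Q(\E[2^r j]) = \Q(\E[2^r])\Q(\E[j])$ for odd $j$, and the interpretation of ``conductor $4j$ (resp.\ $8j$)''. The latter is just the statement that the field lies in $\Q(\zeta_{4j})$ (resp.\ $\Q(\zeta_{8j})$), which is all the argument uses.
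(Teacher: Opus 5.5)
Your proof is correct and is essentially the paper's argument. Kronecker--Weber puts $\Q(\E[4])$ (resp.\ $\Q(\E[8])$) inside $\Q(\zeta_{4j})$ (resp.\ $\Q(\zeta_{8j})$), and the hypothesis plus the Weil pairing already place that cyclotomic field inside $\Q(\E[2j])$ (resp.\ $\Q(\E[4j])$); you also make explicit the step $\Q(\E[4j]) = \Q(\E[4])\Q(\E[j])$ for odd $j$, which the paper leaves implicit.
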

\begin{proof}
  \begin{enumerate}
  \item We have that $\zeta_{2j} \in \Q(\E[2j])$ but also that $\zeta_4 \in \Q(\E[2])$. Thus
    \[\Q(\E[4]) \subset \Q(\zeta_{4j}) \subset \Q(\E[2j])\]
    and so $\Q(\E[2j]) = \Q(\E[4j])$.
  \item The proof is identical with $2$ replaced by $4$ and $4$ replaced by $8$.
  \end{enumerate}
\end{proof}

In~\cite{rouse_elliptic_2015} the possible $2$-adic Galois images for elliptic curves over $\Q$ are classified; in
~\cite{gonzalez-jimenezEllipticCurvesAbelian2016} these are used to enumerate elliptic curves with abelian $2^r$-division
field. For each conjugacy class of elementary abelian 2-group $H \subset GL_2(\Z/2^r\Z)$ there is an explicit modular
curve $X_H$ of genus 0 parametrising elliptic curves with $\im(\rho_{\E, 2^r}) \subset H$ (up to
conjugation)\footnote{There is a subtlety around quadratic twists, which we take care of below.}. The cases
in which this modular curve actually has rational points are listed in
~\cite[Tables~3~and~4]{gonzalez-jimenezEllipticCurvesAbelian2016}. Explicit generators for $H$ and parametrisations for
the curves may then be found at \url{https://users.wfu.edu/rouseja/2adic/}.

Suppose first that $r = 2$. To obtain subgroups $H$ such that elliptic curves $\E$ on $X_H$ satisfy the conditions of
Proposition~\ref{prop:ab-four-torsion} we must have that $\ker(H \to GL_2(\F_2)) = H\cap SL_2(\Z/4\Z)$. Checking each
possibility, the relevant modular curves are \texttt{X60d}, \texttt{X60}, \texttt{X27f}, \texttt{X27h},
\texttt{X27}. The notation is such that e.g.\ \texttt{X27f} corresponds to a subgroup $H$ not containing $\pm I$ and
\texttt{X27} corresponds to the subgroup with $\pm I$ added; the elliptic curves on \texttt{X27} will be quadratic
twists of those on \texttt{X27f}.

Curves on \texttt{X60d} (see~\cite{gonzalez-jimenezEllipticCurvesAbelian2016} or
\url{https://users.wfu.edu/rouseja/2adic/X60d.html}) have $\im(\rho_{\E, 4}) =\{I, \twomat{1}{1}{0}{-1}\}$ up to
conjugacy. Explicitly, we may take $\E$ given by
  \[y^2 = x^3 + (-432t^8 + 1512t^4 - 27)x + (3456t^{12} + 28512t^8 - 7128t^4 - 54)\]
  for $t \in \Q$ such that the discriminant is nonzero. In this case, $\Q(\E[4]) = \Q(\E[2]) = \Q(i)$, so this does not
  work for Proposition~\ref{prop:ab-four-torsion} unless $j = 1$; however, we can take quadratic twists (i.e. points on \texttt{X60}).

  For $\E = \E_t$ in this family, if $\E^{(\pm j)}$ is the quadratic twist of $\E$ by $\pm j$ for $j > 1$ odd and
  squarefree, then $\rho_{\E^{(\pm j)},4} = \chi_{\pm j}\rho_{\E,4}$, where $\chi_{\pm j}$ is the quadratic character
  associated to $\Q(\sqrt{\pm j})$, and the image of $\rho_{\E^{(\pm j)},4}$ is generated by
  \[
    \begin{pmatrix}
      1 & 1 \\
      0 & -1
    \end{pmatrix},
    \begin{pmatrix}
      -1 & 0 \\
      0 & -1
    \end{pmatrix}.
  \]
  For this subgroup, the kernel of reduction modulo $2$ coincides with the kernel of the determinant, and so
  $\Q(\E^{(\pm j)}[2]) = \Q(i)$. Moreover, $\Q(\E^{(\pm j)}[4]) = \Q(i,\sqrt{\pm j})$ has conductor $4j$. Therefore
  $\E^{(\pm j)}$ satisfies the hypotheses of Proposition~\ref{prop:ab-four-torsion}.

  For \texttt{X27h} there is similarly a parametrisation (see~\cite[Section~6.2]{gonzalez-jimenezEllipticCurvesAbelian2016})
  \[\E_t : y^2 = x^3 + (-432t^4 + 1512t^2 - 27)x + (3456t^6 + 28512t^4 - 7128t^2 - 54).\]
  Curves in this family have mod 4 Galois image contained in the subgroup generated by
\[
    \begin{pmatrix}
      1 & 1 \\
      0 & -1
    \end{pmatrix},
    \begin{pmatrix}
      1 & 2 \\
      0 & 1
    \end{pmatrix}
  \]
  and the hypotheses of Proposition~\ref{prop:ab-four-torsion} hold as long as we can arrange that the conductor of
  $\Q(\E_t[4])$ has the form $4j$ for $j$ odd. An explicit calculation\footnote{Code available at \url{https://github.com/jackshotton/invariant-factors-public}.} using
  ~\cite[Corollary~2.5]{gonzalez-jimenezEllipticCurvesAbelian2016} shows that $\Q(\E_t[4]) = \Q(i, \sqrt{t})$ and so we
  may take $t$ to be any odd integer. Curves on \texttt{X27} or \texttt{X27f} are quadratic twists of these by some
  squarefree integer $j$ and we will obtain similar examples as long as $j$ is odd.

  Now we suppose that $r = 3$. In this case we look for subgroups $H \subset GL_2(\Z/8\Z)$ such that
  $\ker(H \to GL_2(\Z/4\Z)) \subset SL_2(\Z/8\Z)$ and such that $H$ is an elementary abelian $2$-group. We
  check\footnote{Code available at \url{https://github.com/jackshotton/invariant-factors-public}.} each
  subgroup corresponding to a modular curve on the list in~\cite[Table
  4]{gonzalez-jimenezEllipticCurvesAbelian2016}. The subgroups of the form
  \texttt{X58?} may be disregarded, as in these cases $I + 4 M_2(\Z/8\Z) \subset H$ so our condition cannot hold. Of the
  remainder, our condition holds exactly for \texttt{X183e}, \texttt{X183k}, \texttt{X187e}, \texttt{X187i},
  \texttt{X189i} and \texttt{X189l}.

  We work through the case of \texttt{X187i}. From \url{https://users.wfu.edu/rouseja/2adic/187i} we have the
  parametrisation:
   \[
  \E_t : y^2 = x^3 + \left(-108t^{16} - 24192t^8 - 27648\right)x - 432t^{24} + 228096t^{16} + 3649536t^8 - 1769472.
  \]
  Setting $t = 1$ we obtain the elliptic curve with LMFDB label
  \href{https://www.lmfdb.org/EllipticCurve/Q/960/a/5}{960.a5}. This has equation
  \[
    \E : y^2 = x^3 - x^2 - 641x + 3105 = (x-23)(x-5)(x+27).
  \]
  The pairwise differences of the roots are $18$, $50$ and $32$, so
  ~\cite[Corollary~2.5]{gonzalez-jimenezEllipticCurvesAbelian2016} gives
  \[
    \Q(\E[4]) = \Q(i,\sqrt{2}) = \Q(\zeta_8).
  \]
  For this curve one finds that $\Q(\E[8])$ is abelian of conductor $120 = 8\cdot 15$, and hence by Proposition~\ref{prop:ab-four-torsion} we have
  \[
    \Q(\E[60]) = \Q(\E[120]).
  \]

\subsubsection{Abelian $3$-torsion}

We may also construct examples starting from abelian $3$-division fields. These must be quadratic or biquadratic and in
each case there is a simple parametrisation. To obtain coincidences, we exploit the fact that the adelic image is always
contained in the Serre subgroup (see the proof of Proposition~\ref{prop:serre}), the kernel of a particular quadratic
character.

\begin{prop}\label{prop:min-three-torsion} Suppose that $\Q(\E[3])= \Q(\zeta_3)$ and that $3 \mid \Delta_\E^{sf}$, the
  squarefree part of the discriminant of $\E$. Let
\[3m = \begin{cases} 2 |\Delta_\E^{sf}| & \text{if $\Delta_\E^{sf} \equiv 1 \bmod 4$} \\
4 |\Delta_\E^{sf}| & \text{otherwise.}
    \end{cases}\]
Then $\Q(\E[m]) = \Q(\E[3m])$.
\end{prop}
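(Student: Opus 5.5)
The plan is to show directly that every $\sigma \in \GalQ$ fixing $\Q(\E[m])$ also fixes $\Q(\E[3])$. Since $\Q(\E[3m])$ is the compositum of $\Q(\E[m])$ and $\Q(\E[3])$, this gives $\Q(\E[3m]) = \Q(\E[m])$. The argument uses the quadratic character that cuts out $\Q(\sqrt{\Delta_\E})$, which is the relation underlying the Serre subgroup from the proof of Proposition~\ref{prop:serre}.

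\emph{Reduction to $\zeta_3$.} First I record what the hypothesis $\Q(\E[3]) = \Q(\zeta_3)$ gives: the image of $\rho_3$ has order $2$ and $\det$ is injective on it. Hence any $\sigma$ with $\sigma(\zeta_3) = \zeta_3$ has $\rho_3(\sigma) = I$, and it suffices to show that $\sigma$ fixes $\zeta_3$, equivalently $\sqrt{-3}$.

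\emph{Arithmetic of $m$.} Write $d = \Delta_\E^{sf}$ and let $\chi$ be the quadratic Dirichlet character of $\Q(\sqrt{d})$. Its conductor is $f = |d|$ if $d \equiv 1 \bmod 4$ and $f = 4|d|$ otherwise. Since $d$ is squarefree and $3 \mid d$, we can factor $\chi = \chi_{-3}\cdot \chi'$, where $\chi_{-3}$ is the character of $\Q(\sqrt{-3})$ and $\chi'$ is a character of conductor dividing $f/3$. Checking the two cases of the definition of $m$:
\begin{itemize}
\item if $d \equiv 1 \bmod 4$, then $m = 2|d|/3$, which is even and divisible by $f/3 = |d|/3$;
\item otherwise $m = 4|d|/3 = f/3$, which is even.
\end{itemize}
In both cases $2 \mid m$ and $f/3 \mid m$. (Note also that $3 \nmid m$ because $d$ is squarefree, though this is not needed.)

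\emph{Main step.} Let $\sigma$ fix $\Q(\E[m])$. Because $2 \mid m$, $\sigma$ fixes $\Q(\E[2])$. Since $\sqrt{\Delta_\E} \in \Q(\E[2])$, we get $\chi(\sigma) = 1$. Because $f/3 \mid m$ and $\zeta_m \in \Q(\E[m])$, $\sigma$ fixes $\zeta_{f/3}$, so $\chi'(\sigma) = 1$. Therefore $\chi_{-3}(\sigma) = \chi(\sigma)\chi'(\sigma)^{-1} = 1$, that is, $\sigma$ fixes $\sqrt{-3}$ and hence $\zeta_3$. By the reduction step, $\sigma$ acts trivially on $\E[3]$, so $\sigma$ fixes $\Q(\E[3m])$.

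The only real care needed is the conductor bookkeeping in the second step: verifying in each residue class of $d$ modulo $4$ that the part of $\chi$ away from $3$ has conductor dividing $m$, and that $m$ is even so that $\Q(\sqrt{\Delta_\E}) \subset \Q(\E[2]) \subset \Q(\E[m])$. The factor $2$ or $4$ in the definition of $3m$ is chosen precisely to make both conditions hold. Everything else is formal Galois theory.
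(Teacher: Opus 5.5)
Your proof is correct and is essentially the paper's argument. The paper phrases $\sqrt{\Delta_\E}\in\Q(\E[2])$ as the triviality of the Serre character $\epsilon\cdot(\chi\circ\det)$ on the Galois image, and then, exactly as you do, uses $2\mid m$ and the fact that $\chi'$ has conductor dividing $m$ to force $\chi_3=1$ on everything fixing $\Q(\E[m])$.

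One small correction: your parenthetical that $3\nmid m$ is ``not needed'' is wrong. The identity $\Q(\E[3m])=\Q(\E[m])\,\Q(\E[3])$ in your first paragraph relies on $\gcd(3,m)=1$; if $3\mid m$, that compositum is just $\Q(\E[m])$. Since you do verify $3\nmid m$, nothing is actually missing from the proof.
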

\begin{proof}
  Let $\chi$ be the primitive Dirichlet character associated to the quadratic extension $\Q(\sqrt{\Delta_\E})$ and let
  $\epsilon$ be as in the proof of Proposition~\ref{prop:serre}. We may factor $\chi$ as $\chi_3 \chi'$ where $\chi_3$ is
  the quadratic character of conductor 3 and $\chi'$ is a quadratic character of conductor coprime to 3. Then the
  character $\epsilon \cdot \chi\circ \det$ of $GL_2(\Z/m_\E\Z)$ is trivial on the image $H$ of $\rho \bmod m_\E$. If
  $h \in H \cap \Gamma_m$, then $\chi'(\det(h)) = 1$ and $\epsilon(h) = 1$. It follows that $\chi_3(\det(h)) = 1$, so
  that $h$ fixes $\zeta_3$. Thus
    \[\Q(\E[3]) = \Q(\zeta_3) \subset \Q(\E[m])\]
    and so $\Q(\E[m]) = \Q(\E[3m])$, as required.
\end{proof}

\begin{remark}\label{rem:hesse}
  As in~\cite[Section~6.2]{gonzalez-jimenezEllipticCurvesAbelian2016}, the curves with
  $\Q(\E[3])=\Q(\zeta_3)$ are parametrised by the family
  \[
    \E_t : y^2 = x^3 - 27t(t^3+8)x + 54(t^6-20t^3-8).
  \]
  where $t \in \Q$ and $t \ne 1$. This is the Weierstrass form of the Hesse plane cubic
  \[
    x^3 + y^3 + z^3 = 3txyz.
  \]
  As in
  ~\cite{joyeHessianEllipticCurves2001}, the discriminant of this model is $2^{12}3^9(t^3-1)^3$. If we choose $t$ so that
  $v_3(t^3 - 1)$ is even (for instance, $t = 3u$ for $u \in \Z$), then $3 \mid \Delta_{\E_t}^{sf}$ and $\E_t$ satisfies the hypotheses of
  Proposition~\ref{prop:min-three-torsion}.
\end{remark}

\begin{remark}\label{rem:hesse-twist}
  Suppose that $\E$ and $m$ are as in Proposition~\ref{prop:min-three-torsion} and that $d$ is squarefree with
  $\Q(\sqrt{d}) \subset \Q(\zeta_m)$. If $\E^{(d)}$ is the quadratic twist of $\E$ by $d$ then
  $\Q(\E^{(d)}[3]) = \Q(\zeta_3, \sqrt{d})$ and we still have
  \[\Q(\E^{(d)}[m]) = \Q(\E^{(d)}[3m])\]
  by exactly the same argument as in the proof of Proposition~\ref{prop:min-three-torsion}, noting that
  $\Delta^{sf}_{\E^{(d)}} = \Delta^{sf}_{\E}$.

  We give a numerical example: let $t = 2$ in the above parametrisation and $\E$ be the resulting curve. We obtain the
  elliptic curve with LMFDB label \href{https://www.lmfdb.org/EllipticCurve/Q/189/b/2}{189.b2} which has discriminant
  $3^9\cdot 7^3$ and so $\Q(\E[14]) = \Q(\E[42])$ by Proposition~\ref{prop:min-three-torsion}. By
  Remark~\ref{rem:hesse-twist}, the same holds with $\E$ replaced by its quadratic twist $\E^{(7)}$ (with LMFDB label
  \href{https://www.lmfdb.org/EllipticCurve/Q/1323/h/3}{1323.h3}).
\end{remark}

\subsection{Smorgasbord}
\label{sec:smorgasbord}

We finish with a small selection of examples of coincidences that we found interesting.

\begin{example}
\begin{enumerate}
    \item There are numerous examples in which $\Q(\E[2])$ is quadratic and contained in multiple division fields
      $\Q(\E[j])$ for $j$ odd. For example, the elliptic curve with LMFDB label
      \href{https://www.lmfdb.org/EllipticCurve/Q/198/c/1}{198.c1} has
    \[\Q(\E[2]) = \Q(\sqrt{33}) \subset \Q(\E[33]) \cap \Q(\E[55]).\]
  \item The elliptic curve with LMFDB label \href{https://www.lmfdb.org/EllipticCurve/Q/196/a/1}{196.a1} has $\Q(\E[2])$
    cubic of conductor 7 and therefore \[\Q(\E[2]) = \Q(\zeta_7)^+ \subset \Q(\E[7])\] as in
    Proposition~\ref{prop:ab-two-torsion}. However, in this case we also have $\Q(\E[2]) \subset \Q(\E[9])$ and
    therefore $\Q(\E[9]) = \Q(\E[18])$. In particular, $\Cc_{\E, 7} = \Cc_{\E, 9} = 0$, showing that $\Cc_{\E,j}$ can
    vanish for two coprime values of $j$.
    \item The elliptic curve with LMFDB label \href{https://www.lmfdb.org/EllipticCurve/Q/300/b/1}{300.b1} has
      $\Q(\E[2]) \subset \Q(\E[9])$ and $\Q(\E[2])$ nonabelian.
    \item  The elliptic curve with LMFDB label \href{https://www.lmfdb.org/EllipticCurve/Q/300/b/2}{300.b2} has 
    \[\Q(\E[2]) \subset \Q(\E[3]) \subset \Q(\E[10]).\]
    Thus $\Q(\E[3]) = \Q(\E[6])$ and $\Q(\E[10])= \Q(\E[30])$. Note that in this case the mod $3$ representation has
    image of order 12, and so is nonabelian; in particular this curve has a $3$-coincidence that is not explained by
    Proposition~\ref{prop:min-three-torsion}.
   \item The elliptic curve with LMFDB label \href{https://www.lmfdb.org/EllipticCurve/Q/1680/g/2}{1680.g2} has 
    $\Q(\E[4]) \subset \Q(\E[105])$
    and so \[\Q(\E[105]) = \Q(\E[210]) = \Q(\E[420]).\] Here $\Gal(\Q(\E[4])/\Q)$ is elementary abelian of order 8. 
    \end{enumerate}
\end{example}

\section{\texorpdfstring{Positivity of $\Cc_{\E,1}$}{Positivity of C(E,1)}}
\label{sec:CoMu}

The purpose of this section is to explain and repair a small error in the proof of Corollary~\ref{cor:CE1} given in~\cite[Section~6]{CoMu}.
The following lemma, which is the key to the argument, is proved in~\cite{CoMu} via an inclusion-exclusion argument.

\begin{lemma} [\cite{CoMu}, Lemma 6.1] \label{C6.1}
    Let $\mathcal{F}=(F_q)_{q \in \mathcal{P}}$, $\mathcal{F}'=(F'_q)_{q \in \mathcal{P}'}$ be two families of finite Galois extensions of a number field $F$, indexed by sets of rational primes $\mathcal{P}' \subset \mathcal{P}$. For any square-free integer $k$ composed of primes from $\mathcal{P}$, define:
    \begin{itemize}
        \item fields $F_k = \prod_{q|k, q \in \mathcal{P}} F_q$ and  $F'_k = \prod_{q|k,  q \in \mathcal{P}'} F'_q $, where the product denotes compositum;
        \item integers $N_k = [F_k:F]$ and $N'_k = [F'_k:F]$ where $N_1=N'_1=1$;
        \item quantities
        \[\delta(\mathcal{F}) = \sum_{\substack{k \\ q|k \Rightarrow q \in \mathcal{P}}} \frac{\mu(k)}{N_k}, \quad \delta(\mathcal{F}') = \sum_{\substack{k \\ q|k \Rightarrow q \in \mathcal{P}'}} \frac{\mu(k)}{N'_k}.\]
    \end{itemize}
    In addition, suppose that:
    \begin{itemize}
        \item $\mathcal{F}$ \emph{covers} $\mathcal{F}'$. That is: for all $q' \in \mathcal{P}'$, there exists $q \in \mathcal{P}$ such that $F'_{q'} \subseteq F_q$, and for all $q \in \mathcal{P}$, there exists $q' \in \mathcal{P}'$ such that $F'_{q'} \subseteq F_q$;
        \item The sums defining $\delta(\mathcal{F})$ and $\delta(\mathcal{F}')$ are absolutely convergent.
    \end{itemize}
    Then $\delta(\mathcal{F}) \ge \delta(\mathcal{F}')$.
\end{lemma}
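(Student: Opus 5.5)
The plan is to interpret both sums as proportions of elements of a finite Galois group that fix none of the given fields. I would first prove the inequality for finite families using the second half of the covering condition, and then pass to the limit using absolute convergence. The first half of the covering condition ($F'_{q'}\subseteq$ some $F_q$) should not be needed.

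\emph{Finite case.} Suppose $\mathcal{P}$ (and hence $\mathcal{P}'$) is finite. Let $L$ be the compositum of all the $F_q$ and $F'_{q'}$, and set $G=\Gal(L/F)$. For squarefree $k$ supported on $\mathcal{P}$, an element $\sigma\in G$ fixes $F_k$ pointwise if and only if it fixes every $F_q$ with $q\mid k$, and $1/N_k=|\Gal(L/F_k)|/|G|$. Swapping the order of summation gives
\[\delta(\mathcal{F})=\frac{1}{|G|}\sum_{\sigma\in G}\ \sum_{k\,:\,\sigma|_{F_k}=\mathrm{id}}\mu(k)=\frac{1}{|G|}\,\#\{\sigma\in G:\sigma|_{F_q}\ne\mathrm{id}\text{ for all }q\in\mathcal{P}\}.\]
The inner sum is $\sum_{k\mid Q_\sigma}\mu(k)$, where $Q_\sigma$ is the product of the $q\in\mathcal{P}$ with $\sigma|_{F_q}=\mathrm{id}$. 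It therefore equals $1$ if $Q_\sigma=1$ and $0$ otherwise. The same computation in the same group $G$ expresses $\delta(\mathcal{F}')$ as the proportion of $\sigma$ fixing no $F'_{q'}$ pointwise.

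Now suppose $\sigma$ fixes some $F_q$ pointwise. By the covering hypothesis there is $q'\in\mathcal{P}'$ with $F'_{q'}\subseteq F_q$, so $\sigma$ also fixes $F'_{q'}$. Hence
\[\{\sigma:\sigma\text{ fixes no }F_q\}\supseteq\{\sigma:\sigma\text{ fixes no }F'_{q'}\},\]
and $\delta(\mathcal{F})\ge\delta(\mathcal{F}')$ follows.

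\emph{Infinite case.} For a finite $P_0\subseteq\mathcal{P}$, write $\delta_{P_0}(\mathcal{F})$ for the sum restricted to $k$ supported on $P_0$, and define $\delta_{P'_0}(\mathcal{F}')$ similarly. By absolute convergence,
\[|\delta(\mathcal{F})-\delta_{P_0}(\mathcal{F})|\le\sum_{k\not\in\langle P_0\rangle}\frac{1}{N_k},\]
and this tail tends to $0$ as $P_0$ exhausts $\mathcal{P}$; the same holds for $\mathcal{F}'$.

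Given $x$, I would take $P_0=\{q\in\mathcal{P}:q\le x\}$. I would then take $P'_0$ to be $\{q'\in\mathcal{P}':q'\le x\}$ together with, for each $q\in P_0$, one chosen $q'$ with $F'_{q'}\subseteq F_q$. The truncated family $(F_q)_{q\in P_0}$ then still covers $(F'_{q'})_{q'\in P'_0}$ in the sense actually used, namely that every $F_q$ contains some member of the primed family. Note that the enlargement goes in the harmless direction: adding extra fields to the primed family only makes the primed set of $\sigma$ smaller. The finite case gives $\delta_{P_0}(\mathcal{F})\ge\delta_{P'_0}(\mathcal{F}')$. Letting $x\to\infty$, both $P_0$ and $P'_0$ exhaust $\mathcal{P}$ and $\mathcal{P}'$, so both sides converge to $\delta(\mathcal{F})$ and $\delta(\mathcal{F}')$ respectively.

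\emph{Main obstacle.} The delicate step is this truncation. Naively truncating $\mathcal{F}'$ independently of $\mathcal{F}$ can break the covering relation: some $F_q$ with $q\in P_0$ may be covered only by an $F'_{q'}$ that has been discarded. This is why $P'_0$ must be built from $P_0$ as above. One also has to check that the resulting $P'_0$ still exhausts $\mathcal{P}'$, which the inclusion of all $q'\le x$ guarantees, so that its tail estimate applies.
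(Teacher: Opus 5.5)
Your proof is correct. The finite case is the inclusion--exclusion count of elements of $\Gal(L/F)$ fixing none of the fields, the same computation as in the proof of Proposition~\ref{prop:Cejiff}. Your truncation, which enlarges $P'_0$ so that every $F_q$ with $q \in P_0$ stays covered, correctly handles infinite families and uses only the second half of the covering hypothesis. The finite argument never needs $P'_0 \subseteq P_0$, which can fail after the enlargement. The paper gives no proof of its own: it attributes the lemma to \cite{CoMu} ``via an inclusion-exclusion argument'', which is essentially the route you take.
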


\begin{cor} \label{C2b}
    If, in addition to the hypotheses in Lemma~\ref{C6.1}, the fields $(F'_q)_{q \in \mathcal{P}'}$ are linearly disjoint over $F$ then
    \[
    \delta(\mathcal{F}) \ge \delta(\mathcal{F}') =  \prod_{q \in \mathcal{P}'} \left(1-\frac{1}{N'_q}\right).
    \]
\end{cor}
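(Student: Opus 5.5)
The inequality $\delta(\mathcal{F}) \ge \delta(\mathcal{F}')$ is exactly Lemma~\ref{C6.1}, so all I need to show is the identity
\[\delta(\mathcal{F}') = \prod_{q \in \mathcal{P}'}\left(1 - \frac{1}{N'_q}\right).\]

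First, linear disjointness makes the degrees multiplicative. If the family $(F'_q)_{q\in\mathcal{P}'}$ is linearly disjoint over $F$, then for any squarefree $k$ composed of primes in $\mathcal{P}'$ the compositum satisfies
\[[F'_k : F] = \prod_{q \mid k}[F'_q:F].\]
That is, $N'_k = \prod_{q\mid k} N'_q$. I take linear disjointness of a family in the usual sense: each finite subfamily is linearly disjoint, equivalently $\Gal(F'_k/F) \cong \prod_{q\mid k}\Gal(F'_q/F)$ for Galois extensions. This is the only input needed from the hypothesis.

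Next, $\mu$ is multiplicative, so the summand $k \mapsto \mu(k)/N'_k$ is a multiplicative function on squarefree $k$ supported on $\langle \mathcal{P}'\rangle$. It takes the value $-1/N'_q$ at each prime $q \in \mathcal{P}'$.

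Then I expand the product. If $\mathcal{P}'$ is finite, the expansion gives the identity at once. If $\mathcal{P}'$ is infinite, the assumed absolute convergence of $\sum_k \mu(k)/N'_k$ gives $\sum_{q\in\mathcal{P}'} 1/N'_q < \infty$, since these terms are a subset of the absolutely convergent series. So the Euler product $\prod_q(1-1/N'_q)$ converges. I take finite truncations: let $P_x = \{q\in\mathcal{P}' : q\le x\}$. The partial product over $P_x$ equals the sum of the summand over $k$ supported on $P_x$, and as $x\to\infty$ this tends to the full sum by dominated convergence (absolute convergence). This gives the equality.

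The only delicate point is the passage to the limit when $\mathcal{P}'$ is infinite, and absolute convergence handles it routinely. I also need to note that $N'_q \ge 1$, with the factor zero if some $F'_q = F$; this is consistent with the formula and requires no special treatment.
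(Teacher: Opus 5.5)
Your argument is correct and is the intended one; the paper states this corollary without proof. The inequality is Lemma~\ref{C6.1}, and linear disjointness of the whole family makes $k \mapsto \mu(k)/N'_k$ multiplicative, so the absolutely convergent sum factors as the Euler product. You were right to require disjointness of every finite subfamily rather than pairwise disjointness: as the paper explains after the corollary, only pairwise disjointness holds in the application in~\cite{CoMu}, and that is the gap the paper repairs.
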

In~\cite{CoMu} this is applied as follows. Let $\E$ be a non-CM elliptic curve over $\Q$ with $\Q(\E[2]) \ne \Q$ and let $F_q = \Q(\E[q])$ for each prime $q$ (with $\mathcal{P}$ the set of all primes). Let $K_2 \subset F_2$ be the largest abelian extension of $\Q$ contained in $F_2$, which is either quadratic or cubic by assumption. Let 
\[\mathcal{P}' = \{q \in \mathcal{P} : \Q(\zeta_q) \cap K_2 = \Q\}\]
and, for $q \in \mathcal{P}'$, define 
\[F'_q = \begin{cases} F_q & \text{if $q \nmid m_\E$} \\ 
\Q(\zeta_q) & \text{if $q \mid m_\E$, $q \ne 2$} \\
K_2 & \text{if $q = 2$}.
\end{cases}\]
Then with $\Fc = \{F_q\}_{q \in \Pc}$ and $\Fc' = (F'_q)_{q \in \Pc'}$ we have that $\Fc$ covers $\Fc'$. It is asserted in~\cite{CoMu} that the fields $\Fc'$ are linearly disjoint, and it would then follow from Corollary~\ref{C2b} that \[\Cc_{\E,1} = \delta(\Fc) > 0.\] However, although the fields $F'_q$ are \emph{pairwise} linearly disjoint, they are \emph{not} linearly disjoint if (for example) the conductor of $\Q(\sqrt{\Delta_\E})$ is composite and odd. This is the gap in~\cite[Section 6]{CoMu}. It may be fixed by making a closer examination of $\delta(\Fc')$, which we now do.

    Define 
    \[R = \prod_{\substack{q \mid m_\E\\ q \ne 2\\ K_2 \cap \Q(\zeta_q) = \Q}} q\] and define $S$ to be the minimal integer dividing $R$ such that $K_2 \subset \mathbb{Q}(\zeta_S)$ (if there is no such integer $S$, then the families $\mathcal{F}$ and $\mathcal{F}'$ as defined above fulfil the conditions of Corollary~\ref{C2b} and we argue as in~\cite{CoMu}).
    
   Let $d=[K_2:\mathbb{Q}] = 2$ or $3$. Then, for $k|R$, we have, $N'_k=\phi(k)$ and 
   \[N'_{2k}= \begin{cases}d \phi(k) & \text{if $S \nmid k$}\\ \phi(k) &  \text{if $S|k$.}\end{cases}\] Thus
    \begin{align*}
        \sum_{k|R}\mu(k)\left(\frac{1}{N'_k}-\frac{1}{N'_{2k}} \right)&=\sum_{k|R}\mu(k)\left(\frac{1}{\phi(k)}-\frac{1}{d\phi(k)}\right)-\sum_{k|\frac{R}{S}}\mu(Sk)\left(\frac{1}{\phi(Sk)}-\frac{1}{d\phi(Sk)}\right) \\
        &=\left(1-\frac{1}{d}\right)\left(\prod_{q|R }\left(1-\frac{1}{\phi(q)}\right)-\frac{\mu(S)}{\phi(S)}\prod_{q|\frac{R}{S} }\left(1-\frac{1}{\phi(q)}\right)\right) \\
        &= \frac{1}{\phi(S)}\left(1-\frac{1}{d}\right)\prod_{q|\frac{R}{S} }\left(1-\frac{1}{\phi(q)}\right)\left(\prod_{q|S }(\phi(q)-1)-\mu(S)\right) \\
        &>0
    \end{align*}
    since $S \ge 3$. Therefore
    \[\delta(\Fc') = \sum_{\substack{k \\ q|k \Rightarrow q \in \mathcal{P}'}} \frac{\mu(k)}{N'_k}=\sum_{k|R}\mu(k)\left(\frac{1}{N'_k}-\frac{1}{N'_{2k}} \right)\prod_{q \: \nmid \, m_{\E}} \left(1-\frac{1}{N'_q}\right)>0.\]
    Therefore, by Lemma~\ref{C6.1}, 
\[\mathcal{C}_{\E,1}=\delta(\mathcal{F})\ge\delta(\mathcal{F}')> 0\]
as required.

\printbibliography
\end{document}